\theoremstyle{plain}
\newtheorem{thm}{Theorem}[section]
\newtheorem*{thm*}{Theorem}
\newtheorem{lem}[thm]{Lemma}
\theoremstyle{definition}
\theoremstyle{remark}
\newtheorem{rem}[thm]{Remark}
\numberwithin{equation}{section}
\newcommand{\bb}[1]{\ensuremath{\mathbb #1}}
\newcommand{\cal}[1]{\ensuremath{\mathcal #1}}
\newcommand{\scr}[1]{\ensuremath{\mathscr #1}}
\newcommand{\union}{\cup}
\newcommand{\Union}{\bigcup}
\newcommand{\isect}{\cap}
\newcommand{\umd}{\ensuremath{\mathrm{UMD}}}
\newcommand{\umdconst}{\ensuremath{\mathscr{U}}}
\newcommand{\charfun}{\ensuremath{\mathtt 1}}
\newcommand{\dint}{\scr D}
\DeclareMathOperator{\sign}{sign}
\DeclareMathOperator{\Id}{Id}
\DeclareMathOperator{\card}{\#}
\DeclareMathOperator{\supp}{supp}
\DeclareMathOperator{\dist}{dist}
\DeclareMathOperator{\cond}{\bb E}
\DeclareMathOperator{\indop}{\bb I}
\DeclareMathOperator{\salg}{\text{$\sigma$-algebra}}
\DeclareMathOperator{\pred}{\pi}
\title{The One--Third--Trick and Shift Operators}
\author{Richard Lechner}
\thanks{Partially supported by FWF P20166-N18}
\address{Institute of Analysis
  \newline
  \indent Johannes Kepler University Linz
  \newline
  \indent Altenbergerstrasse 69
  \newline
  \indent A-4040 Linz, Austria
}
\date{\today}
\begin{document}

\begin{abstract}
In this paper we obtain a representation as martingale transform
operators for the rearrangement and shift operators introduced by
T. Figiel in \cite{figiel:1988} and~\cite{figiel:1990}.
The martingale transforms and the underlying sigma algebras are
obtained explicitly by combinatorial means.
The known norm estimates for those operators are a direct consequence
of our representation.
\end{abstract}

\maketitle

\tableofcontents

\bigskip

\section{Introduction}\label{s:introduction}

The proof of the $T(1)$ theorem by T. Figiel proceeds by expanding the
integral operator into an absolutely converging series of basic
building blocks $T_m$ and $U_m$, rearranging and shifting the Haar
system.
This involves the following norm estimates for those building blocks,
which T. Figiel obtained by combinatorial means:
\begin{align}
  \|T_m\, :\, L_X^p \rightarrow L_X^p\|
  & \leq C\, \big( \log_2 (2 + |m|) \big)^\alpha,
  \label{eqn:intro-1}\\
  \|U_m\, :\, L_X^p \rightarrow L_X^p\|
  & \leq C\, \big( \log_2 (2 + |m|) \big)^\beta,
  \label{eqn:intro-2}
\end{align}
where the constant $C > 0$ depends only on $p$, the $\umd$--constant
of $X$ and $0 < \alpha, \beta < 1$.
For the original proof see \cite{figiel:1988}
and~\cite{figiel_wojtaszczyk:2001}.
See also~\cite{novikov_semenov:1997} and~\cite{mueller_p.f.x.:2005}.
For extensions to spaces of the homogeneous type
see~\cite{mueller_passenbrunner:2011}.

The purpose of the present paper is to obtain a representation of
$T_m$ and $U_m$ as the sum of roughly $\log_2(2+|m|)$ martingale
transform operators. This is done by combinatorial analysis of the
equations defining $T_m$ and $U_m$.

Our combinatorial analysis exhibits the link of T. Figiel's
rearrangement and shift operators to the so called one-third-trick
originating in the work of \cite{wolff:1982},
\cite{garnett_jones:1982}, \cite{davis:1980}
and~\cite{chang_wilson_wolff:1985}.

\subsection*{Related Recent Developments}

This article is taken from my Ph.D. thesis~\cite{lechner:2011}.

Recently T. P. Hyt\"onen, see~\cite{hytonen:2011}, presented his own
proof of of T. Figiel's vector-valued $T(1)$ theorem,
see~\cite{figiel:1990}.
The basic aim of T. P. Hyt\"onen in~\cite{hytonen:2011} is the same as
ours, to find reductions of the general case to certain preferable
situations where the so called Figiel compatibility condition is
satisfied.
To this end the problem in~\cite{hytonen:2011} is
randomized and the properties of the so called random dyadic
partitions of Nazarov, Treil and Volberg,
see~\cite{nazarov_treil_volberg:1997,nazarov_treil_volberg:2003} are
exploited.

By contrast the proof in the present paper proceeds by finding
explicitly those filtrations that turn a given bad combinatorial
situation into a good one, such that Figiel's compatibility condition
is satisfied. Our reduction is self-contained and develops specific
combinatorics of colored dyadic intervals.

\subsection*{Acknowledgment}

This article is part of my Ph.D. thesis~\cite{lechner:2011} written at
the Department of Analysis at the Johannes Kepler University Linz,
Austria.
I want to thank my Ph.D. advisor P. F. X. M\"uller for helpful
conversations.
This paper benefited greatly from extended discussions with Prof. Anna
Kamont.
\bigskip
\section{Preliminaries}\label{s:preliminaries}

\subsection*{The Haar System in $\bb R$}\hfill

Consider the collection of dyadic intervals at scale $j \in \bb Z$
given by
\begin{equation*}
  \dint_j
  = \big\{\ [2^{-j} k, 2^{-j}(k+1) [\, :\, k \in \bb Z \ \big\},
\end{equation*}
and the collection of the dyadic intervals
\begin{equation*}
  \dint = \Union_{j \in \bb Z} \dint_j.
\end{equation*}
We define the $L^\infty$--normalized Haar system by
\begin{align*}
  h_{[0,1[}(t)
  & = \charfun_{[0,\frac{1}{2}[}(t) - \charfun_{[\frac{1}{2},1[}(t),
  \qquad t \in \bb R,
  \intertext{and for every $I \in \dint$ set}
  h_I(t) & = h_{[0,1[}\big( \frac{t - \inf I}{|I|} \big),
  \qquad t \in \bb R,
\end{align*}
where $\charfun_A$ denotes the characteristic function of a set $A$.

\subsection*{Banach Spaces with the $ \umd$--Property}\hfill

By $L^p(\Omega, \mu; X)$ we denote the space of functions with values
in $X$, Bochner--integrable with respect to $\mu$.
If $\Omega = \bb R$ and $\mu$ is the Lebesgue measure $|\cdot|$ on
$\bb R$, then set $L_X^p(\bb R) = L^p(\bb R, |\cdot|; X)$, if
unambiguous further abbreviated as $L_X^p$.

\smallskip

We say $X$ is a $\umd$ space if for every $X$--valued martingale
difference sequence $\{d_j\}_j \subset L^p(\Omega,\mu;X)$, $1 < p < \infty$, and
choice of signs $\varepsilon_j \in \{-1,1\}$ one has
\begin{equation}\label{eqn:umd--property}
  \big\| \sum_j \varepsilon_j\, d_j \big\|_{L^p(\Omega,\mu;X)}
  \leq \umdconst_p(X)\cdot \big\| \sum_j d_j \big\|_{L^p(\Omega,\mu;X)},
\end{equation}
where $\scr U_p(X)$ does not depend on $\varepsilon_j$ or $d_j$.
The constant $\scr U_p(X)$ is called $\umd$--constant.
We refer the reader to~\cite{burkholder:1981}.

\subsection*{Kahane's Contraction Principle}\hfill

For every Banach space $X$, $1 \leq p < \infty$, finite set
$\{x_j\}_j \subset X$ and bounded sequence of scalars $\{c_j\}_j$ we
have
\begin{equation}\label{eqn:kahanes_contraction_principle}
  \bigg(
  \int_0^1 \Big\| \sum_j r_j(t)\, c_j\, x_j \Big\|_X^p\, \mathrm d t
  \bigg)^{1/p}
  \leq \sup_j |c_j|\cdot
  \bigg(
  \int_0^1 \Big\| \sum_j r_j(t)\, x_j \Big\|_X^p\, \mathrm d t
  \bigg)^{1/p},
\end{equation}
where $\{r_j\}_j$ denotes an independent sequence of Rademacher
functions.
For details see~\cite{kahane:1985}.
Below we give a short proof, see~\cite{kahane:1985}.
\begin{proof}
  By scaling inequality~\eqref{eqn:kahanes_contraction_principle}, we
  may assume $|c_j| \leq 1$, for all $j$.
  We represent each $c_j$ as the series
  $c_j = \sum_{k\geq 1} \varepsilon_{jk}\, 2^{-k}$, with suitable
  $\varepsilon_{jk} \in \{\pm 1\}$ and observe
  \begin{align*}
    \bigg(
    \int_0^1 \Big\| \sum_j r_j(t)\, c_j\, x_j \Big\|_X^p\, \mathrm d t
    \bigg)^{1/p}
    & \leq \sum_{k\geq 1} 2^{-k}
    \bigg(
    \int_0^1 \Big\| \sum_j r_j(t)\, \varepsilon_{jk}\, x_j
    \Big\|_X^p\, \mathrm d t
    \bigg)^{1/p}\\
    & = \bigg(
    \int_0^1 \Big\| \sum_j r_j(t)\, x_j \Big\|_X^p\, \mathrm d t
    \bigg)^{1/p}.
  \end{align*}
  The last equality holds true since
  $\sum_j r_j(t)\, \varepsilon_{jk}\, x_j$ has the same distribution
  as $\sum_j r_j(t)\, x_j$ for all choices of signs $\varepsilon_{jk}$.
\end{proof}

\subsection*{The Martingale Inequality of Stein -- Bourgain's Version}\hfill

Let $(\Omega,\cal F,\mu)$ be a probability space,
and let $\cal F_1 \subset \ldots \subset \cal F_m \subset \cal F$
denote an increasing sequence of $\sigma$--algebras.
For every choice of $f_1,\ldots,f_m \in L^p(\Omega,\mu;X)$, $1 < p < \infty$,
let $r_1,\ldots,r_m$ denote independent Rademacher functions, then
\begin{equation}\label{eqn:steins_martingale_inequality}
  \int_0^1 \big\|
    \sum_{i=1}^m r_i(t)\, \cond(f_i | \cal F_i)
  \big\|_{L^p(\Omega,\mu;X)}\, \mathrm dt
  \leq C\cdot   \int_0^1 \big\|
    \sum_{i=1}^m r_i(t)\, f_i
  \big\|_{L^p(\Omega,\mu;X)}\, \mathrm dt,
\end{equation}
where $C$ depends only on $p$ and $\umdconst_p(X)$.
The scalar valued version of~\eqref{eqn:steins_martingale_inequality}
by E. M. Stein can be found in~\cite{stein:1970-lp}.
The vector valued extension is due to
J. Bourgain~\cite{bourgain:1986}.
A proof may be found in~\cite{figiel_wojtaszczyk:2001}.

\subsection*{Additional Notation}\hfill

Let $\scr N$ be a collection of nested sets, then
$\pred_{\scr N}\, :\, \scr N \rightarrow \scr N$ is defined as follows.
Let $K \in \scr N$ and then $\pred_{\scr N}(K)$ is the smallest element with
respect to inclusion of the
collection $\{M \in \scr N\, :\, K \subsetneq M\}$.
In most cases we will omit the subscript and explicitly state to which nested
collection we refer.

Given a collection of Lebesgue measurable sets $\scr L$, the collection of
Lebesgue measurable sets $\salg(\scr L)$ denotes
the smallest sigma algebra containing $\scr L$.
\bigskip
\section{The One--Third--Trick}\label{s:one-third-trick}

We will introduce and investigate two variants of one-third-shift
operators, that is the bilateral alternating one-third-shift operator
and the unilateral one-third-shift operator.
First we will introduce the bilateral alternating one-third-shift
operator $S$ given by $S(h_I) = h_{\sigma(I)}$,
see~\eqref{eqn:one-third-shift-operator}.
Roughly speaking, $\sigma$ shifts intervals, say for example having
length $1$, to the \emph{right} by one third of their length, so in our
instance by $\frac{1}{3}$.
The intervals having length $\frac{1}{2}$ are then shifted by one
third of their size to the \emph{left}, so by $\frac{1}{6}$.
Hence the relative translation of two successive levels
of dyadic intervals amounts to a total of $\frac{1}{2}$, thus yielding
a nested collection of intervals, again.
This is illustrated in Figure~\vref{fig:shifted_intervals}.
In Theorem~\ref{thm:one-third-trick-isomorphism} we establish that
$S\, :\, L_X^p \longrightarrow L_X^p$ is an isomorphism by means of
Bourgain's version of Stein's martingale inequality.

Finally, we will consider the unilateral variants $S_0$ and $S_1$ of
the one-third-shift operator, and establish in
Theorem~\ref{thm:modified-one-third-trick-isomorphism} that both are
isomorphic maps from $L_X^p$ to itself, as well.

The one-third-trick originates with the work of~\cite{wolff:1982},
\cite{garnett_jones:1982} and~\cite{chang_wilson_wolff:1985}.

\subsection{Bilateral Alternating One-Third-Shift}\label{ss:alternating}\hfill

For every $j \in \bb Z$ let
\begin{equation}\label{eqn:shift_width--1}
  s_j = (-1)^j\,  2^{-j}/3,
\end{equation}
and define
\begin{equation}\label{eqn:shift_width--2}
  s(I) = s_j,
\end{equation}
for all intervals $I$ having measure $|I| = 2^{-j}$.
Then define the one--third--shift map
\begin{equation}\label{eqn:one-third-shift-map}
  \sigma(I) = I + s(I),
\end{equation}
and the one--third--shift operator
\begin{equation}\label{eqn:one-third-shift-operator}
  S(h_I) = h_{\sigma(I)},
\end{equation}
where by $h_{\sigma(I)}$ we denote the function
$h_{\sigma(I)}(x) = h_I(x - s(I))$.
The one--third--shift of dyadic intervals for two consecutive
levels is illustrated in Figure~\vref{fig:shifted_intervals}.
\begin{figure}[bt]
  \begin{center}
    \begin{pspicture}(-2,1)(10.5,-4)
      \newcommand{\interval}[1]{%
        \psline[linewidth=1.2pt,tbarsize=0.15]{|*-|*}(#1,0)}

      \multirput(0,0)(4.5,0){2}{%
        \multirput(0,0)(1.5,-1){2}{%
          \interval{4.5}}}
      \multirput(0,-3)(2.25,0){4}{%
        \multirput(0,0)(-.75,1){2}{%
          \interval{2.25}}}
      \psline[linestyle=dotted](0,-3)(0,0)
      \psline[linestyle=dotted](4.5,-3)(4.5,0)
      \psline[linestyle=dotted](9,-3)(9,0)
      \psline[linestyle=dotted](1.5,-2)(1.5,-1)
      \psline[linestyle=dotted](6,-2)(6,-1)
      \pnode(0,0){coarse-left}
      \pnode(1.5,-1){coarse-right}
      \ncline[nodesep=.25]{->}{coarse-left}{coarse-right}
      \nbput{$s_j$}
      \pnode(8.25,-2){coarse-left}
      \pnode(9,-3){coarse-right}
      \ncline[nodesep=.25]{<-}{coarse-left}{coarse-right}
      \naput*{$s_{j+1}$}
      \rput(-2,0){%
        \uput[0](0,-.5){Level $j$}
        \uput[0](0,-2.5){Level $j+1$}}
    \end{pspicture}
  \end{center}
  \caption{One--third--shift of two consecutive levels of
    intervals. In this illustration $j$ is even.}
  \label{fig:shifted_intervals}
\end{figure}

From this picture one can see that the collection of
one--third--shifted dyadic intervals $\sigma(\dint)$ is nested, and
$\dint \isect \sigma(\dint) = \emptyset$.
Note that if a one--third--shifted dyadic interval
$J \in \sigma(\dint)$ is contained in a non--shifted interval
$I \in \dint$, then $\dist(J,I^c) \geq |J|/3$.
For every given interval $I \in \dint$ exists a unique
one--third--shifted interval $J \in \sigma(\dint)$, $|J| = |I|/2$
being contained in $I$.
First observe that for every $j \in \bb Z$ and $I \in \dint_j$ we have
\begin{equation*}
  \begin{aligned}
    \card \big\{
    J \in \sigma ( \dint_{j+1} )\, :\,
    J \isect I \neq \emptyset
    \big\} & = 3,\\
    \card \big\{
    J \in \sigma ( \dint_{j+1} )\, :\,
    J \subset I
    \big\} & = 1.
  \end{aligned}
\end{equation*}
So we can define $\omega(I)$ by
\begin{equation}\label{eqn:associate}
  \omega(I) = J,
  \quad \text{where $J \in \sigma(\dint)$, $|J| = |I|/2$ and
    $J \subset I$},
\end{equation}
see Figure~\vref{fig:associate}.
\begin{figure}[bt]
  \begin{center}
    \begin{pspicture}(-.75,.5)(6.75,-4)
      \newcommand{\interval}[1]{%
        \psline[linewidth=1.2pt,tbarsize=0.15]{|*-|*}(#1,0)}

      \interval{4.5}
      \multirput(0,-1)(2.25,0){3}{\interval{2.25}}
      \multirput(-.75,-2)(2.25,0){3}{\interval{2.25}}
      \pnode(2.25,0){top}
      \pnode(2.625,-2){bottom}
      \ncline{->}{top}{bottom}
      \pnode(1.125,-1){left-top}
      \pnode(3.375,-1){middle-top}
      \pnode(5.625,-1){right-top}
      \rput(-.75,-1){%
        \pnode(1.125,-1){left-bottom}
        \pnode(3.375,-1){middle-bottom}
        \pnode(5.625,-1){right-bottom}}
      \ncline[linestyle=dashed,dash=2pt 1pt]{->}{left-top}{left-bottom}
      \ncput*{$\sigma$}
      \ncline[linestyle=dashed,dash=2pt 1pt]{->}{middle-top}{middle-bottom}
      \ncput*{$\sigma$}
      \ncline[linestyle=dashed,dash=2pt 1pt]{->}{right-top}{right-bottom}
      \ncput*{$\sigma$}
      \pcline[linewidth=.6pt,offset=-1cm,arrowsize=.1,tbarsize=.15]{|<*->|*}
      (0,-2)(1.5,-2)
      \nbput{$\frac{1}{3}\, |I|$}
      \pcline[linewidth=.6pt,offset=-1cm,arrowsize=.1,tbarsize=.15]{|<*->|*}
      (1.5,-2)(3.75,-2)
      \nbput{$\frac{1}{2}\, |I|$}
      \pcline[linewidth=.6pt,offset=-1cm,arrowsize=.1,tbarsize=.15]{|<*->|*}
      (3.75,-2)(4.5,-2)
      \nbput{$\frac{1}{6}\, |I|$}
      \uput[90](2.25,0){$I$}
      \uput[-90](2.625,-2){$\omega(I)$}
      \psline[linestyle=dotted](0,-3)(0,0)
      \psline[linestyle=dotted](1.5,-3)(1.5,-2)
      \psline[linestyle=dotted](3.75,-3)(3.75,-2)
      \psline[linestyle=dotted](4.5,-3)(4.5,0)
    \end{pspicture}
  \end{center}
  \caption{The interval $I$ has measure $|I| = 2^{-j}$ with $j$ being
    even.}
  \label{fig:associate}
\end{figure}

Note the basic properties summarized in
\begin{lem}\label{lem:one-third-shift}
  The following statements are true.
  \begin{enumerate}[(i)]
  \item $\sigma(\dint)$ is a nested collection of dyadic intervals,
    and $\dint \isect \sigma(\dint) = \emptyset$.
    \label{enu:one-third-shift-1}
  \item $\omega \, :\, \dint \longrightarrow \sigma(\dint)$ is well
    defined and injective.
    \label{enu:one-third-shift-2}
  \item Let $I \in \dint$, then $\omega(I) \subset I$.
    \label{enu:one-third-shift-3}
  \item For every $I \in \dint$ we have
    $\dist(\omega(I),I^c) = |I|/6$.
    \label{enu:one-third-shift-4}
  \item Let $I, J \in \dint$, $|I| = |J|$, then
    $\dist(\omega(I),\omega(J)) < |\omega(I)|$
    if and only if $I = J$.
    \label{enu:one-third-shift-5}
  \item For all $I \in \dint$ we have the identity
    $\sigma(I)
    = \omega(I) \union \big(
      \omega(I) + \sign(s(I)) \cdot |\omega(I)|
      \big)$.
    \label{enu:one-third-shift-6}
  \end{enumerate}
\end{lem}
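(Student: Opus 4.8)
The plan is to reduce every assertion to elementary integer arithmetic by rescaling. Fix $j \in \bb Z$ and put $\delta = 2^{-j}/6 = 2^{-(j+1)}/3$. In units of $\delta$ every $I = [2^{-j}k, 2^{-j}(k+1)[ \in \dint_j$ becomes $[6k, 6k+6[$; its shift $\sigma(I) = I + s_j$ becomes $[6k + 2(-1)^j,\, 6k + 6 + 2(-1)^j[$; and the shift of an interval $I' = [2^{-(j+1)}m, 2^{-(j+1)}(m+1)[ \in \dint_{j+1}$ becomes $[3m - (-1)^j,\, 3m + 3 - (-1)^j[$. Thus, apart from the nestedness across non-adjacent levels in \ref{enu:one-third-shift-1}, all the claims turn into statements about half-open integer intervals.

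From these endpoints one reads off, for $I \in \dint_j$, that $\sigma(I') \subseteq I$ with $I' \in \dint_{j+1}$ forces $6k + (-1)^j \le 3m \le 6k + 3 + (-1)^j$, an interval of length $3$ whose only multiple of $3$ is $6k+3$ if $j$ is even and $6k$ if $j$ is odd; hence $m = 2k+1$, resp.\ $m = 2k$, is the unique solution, and similarly $\sigma(I') \isect I \neq \emptyset$ has exactly three solutions in $m$. This gives the two cardinalities stated just before the lemma, and since $\dint_j$ is a partition of $\bb R$ it also gives \ref{enu:one-third-shift-2}: uniqueness of the solution makes $\omega$ well defined, and if $\omega(I_1) = \omega(I_2)$ then $|I_1| = |I_2|$, so $I_1, I_2 \in \dint_j$ for one $j$ and both contain the nonempty set $\omega(I_1)$, forcing $I_1 = I_2$. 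Statement \ref{enu:one-third-shift-3} is immediate from \eqref{eqn:associate}. For \ref{enu:one-third-shift-4} I substitute the explicit endpoints of $\omega(I)$ (namely $[6k+2, 6k+5[$ for $j$ even and $[6k+1, 6k+4[$ for $j$ odd) into $I = [6k, 6k+6[$: the two one-sided gaps are $\delta$ and $2\delta$, so $\dist(\omega(I), I^c) = \delta = |I|/6$. For \ref{enu:one-third-shift-6} I substitute the endpoints of $\sigma(I)$ and $\omega(I)$ and check that $\sigma(I)$ is $\omega(I)$ concatenated with its translate by $(-1)^j |\omega(I)| = \sign(s(I))\, |\omega(I)|$.

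Statement \ref{enu:one-third-shift-5} needs a bit more care, since the inequality is sharp: for two \emph{adjacent} intervals $I, J \in \dint_j$ one has $\dist(\omega(I), \omega(J)) = |\omega(I)|$ exactly, so it is not enough to bound the distance below by $2\dist(\omega(I), I^c) = |I|/3$. Instead, for $I \neq J$ in $\dint_j$ with $I$ to the left of $J$, I compute directly from the integer endpoints that $\dist(\omega(I), \omega(J)) = (6(l-k) - 3)\,\delta \ge 3\delta = |\omega(I)|$, where $k < l$ index $I$ and $J$; and for $I = J$ the distance is $0 < |\omega(I)|$. This is the step in which the precise shift widths \eqref{eqn:shift_width--1}, the alternating sign together with the factor $1/3$, are really used.

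It remains to prove \ref{enu:one-third-shift-1}. The identity $\dint \isect \sigma(\dint) = \emptyset$ is clear, because a left endpoint of $\sigma(\dint_j)$ lies in $2^{-j}\bb Z + (-1)^j 2^{-j}/3$, which is disjoint from $2^{-j}\bb Z$. For nestedness I would not argue pair by pair but observe that the set $E_j$ of left endpoints of $\sigma(\dint_j)$ is the coset $2^{-j}\bb Z + (-1)^j 2^{-j}/3$, and a one-line computation shows $E_j \subseteq E_{j+1}$ (the endpoint indexed by $k$ at level $j$ is the one indexed by $2k + (-1)^j$ at level $j+1$); hence $E_j \subseteq E_{j'}$ for all $j \le j'$. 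Since $\sigma(\dint_{j'})$ is the partition of $\bb R$ into the half-open intervals between consecutive points of $E_{j'}$, no block of $\sigma(\dint_{j'})$ can contain a point of $E_j$ in its interior, so each such block lies in a single block of $\sigma(\dint_j)$. Therefore any two intervals from $\sigma(\dint)$ are either nested or disjoint. The only genuine obstacle is precisely this passage from the consecutive-level picture of Figure~\ref{fig:shifted_intervals} to arbitrary pairs of levels, and the nested-lattice observation $E_j \subseteq E_{j+1}$ removes it.
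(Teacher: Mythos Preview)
Your proof is correct and complete; every computation checks out, including the delicate ones in \ref{enu:one-third-shift-5} (where adjacency gives equality, not strict inequality) and the endpoint-lattice inclusion $E_j \subseteq E_{j+1}$ for \ref{enu:one-third-shift-1}. The paper itself simply writes ``the assertions are easily verified'' and gives no argument, so your explicit rescaling to integer arithmetic is entirely in the spirit of that one-line proof, only carried out in full.
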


\begin{proof}
  The assertions are easily verified.
\end{proof}

We need to build up some more notation. For all $j \in \bb Z$ and
\begin{equation*}
  u = \sum_{I \in \dint} u_I\, h_I\, |I|^{-1}
\end{equation*}
let $( u )_j$ restrict the function $u$ to level $j$, precisely
\begin{equation}\label{eqn:restriction}
  (u)_j = \sum_{I \in \dint_j} u_I\, h_I\, |I|^{-1}.
\end{equation}
If we define
\begin{equation}\label{eqn:indop}
  \indop (u)_j = \sum_{I \in \dint_j} u_I\, \charfun_I\, |I|^{-1},
\end{equation}
then we find due to Kahane's contraction
principle~\eqref{eqn:kahanes_contraction_principle} that
\begin{equation}\label{eqn:indop-isometry}
  \int_0^1
  \Big\| \sum_{j \in \bb Z} r_j(t)\, (u)_j \Big\|_{L_X^p}
  \mathrm dt
  = \int_0^1
  \Big\| \sum_{j \in \bb Z} r_j(t)\, \indop(u)_j \Big\|_{L_X^p}
  \mathrm dt.
\end{equation}

The following theorem establishes that the one--third--shift operator
$S\, :\, L_X^p \longrightarrow L_X^p$ is an isomorphism.
\begin{thm}\label{thm:one-third-trick-isomorphism}
  Let $1 < p < \infty$ and $X$ a Banach space with the
  $\umd$--property, then there exists a constant $C > 0$ such that
  \begin{equation*}
    \frac{1}{C}\, \big\| u \big\|_{L_X^p}
    \leq \big\| S u \big\|_{L_X^p}
    \leq C\, \big\| u \big\|_{L_X^p},
  \end{equation*}
  for all $u \in L_X^p$.
  The constant $C$ depends only on $\umdconst_p(X)$.
\end{thm}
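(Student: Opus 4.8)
The plan is to prove the isomorphism by reducing the claim, via \eqref{eqn:indop-isometry} and the $\umd$--property \eqref{eqn:umd--property}, to Bourgain's version of Stein's inequality \eqref{eqn:steins_martingale_inequality}. First I would fix a probability space on which all the relevant intervals live; since the statement is an estimate of norms, by a standard localization/limiting argument it suffices to treat a finite linear combination $u = \sum_{I \in \dint, \, |I| \in [2^{-N},2^{N}]} u_I\, h_I\, |I|^{-1}$, so that only finitely many levels $j$ occur. Writing $Su = \sum_j (Su)_j$ with $(Su)_j = \sum_{I \in \dint_j} u_I\, h_{\sigma(I)}\, |I|^{-1}$, the $\umd$--property applied to the martingale difference sequence $\{(Su)_j\}_j$ (which is a difference sequence with respect to the Haar filtration of $\sigma(\dint)$, legitimate by Lemma~\ref{lem:one-third-shift}\eqref{enu:one-third-shift-1}) together with the square-function/Rademacher randomization reduces $\|Su\|_{L_X^p}$, up to constants depending only on $\umdconst_p(X)$, to $\int_0^1 \| \sum_j r_j(t)\, (Su)_j \|_{L_X^p}\, \mathrm dt$, and similarly $\|u\|_{L_X^p}$ is comparable to $\int_0^1 \| \sum_j r_j(t)\, (u)_j \|_{L_X^p}\, \mathrm dt$.

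Next comes the combinatorial heart: I want to exhibit, for each $j$, a $\sigma$--algebra $\cal F_j$ such that $\cal F_1 \subset \cal F_2 \subset \cdots$ is a filtration and such that $(Su)_j$ is, up to a controlled bounded factor, a conditional expectation $\cond(g_j \mid \cal F_j)$ of something built from $(u)_j$ — and conversely. The natural candidate is to let $\cal F_j = \salg(\sigma(\dint_k) : k \leq j)$, the $\sigma$--algebra generated by all one--third--shifted intervals of level at most $j$; by Lemma~\ref{lem:one-third-shift}\eqref{enu:one-third-shift-1} this is an increasing sequence. The key geometric facts are that for $I \in \dint_j$ one has $\omega(I) \subset I$ with $\dist(\omega(I), I^c) = |I|/6$ (Lemma~\ref{lem:one-third-shift}\eqref{enu:one-third-shift-3},\eqref{enu:one-third-shift-4}), so that $h_I$ is constant on each atom of $\cal F_j$ meeting $\omega(I)$'s neighborhood appropriately; concretely, $\cond(h_I \mid \cal F_j)$ is a fixed multiple of $h_{\sigma(I)}$ plus error terms that telescope or vanish because of the $1/3$--separation, and symmetrically $\cond(\indop (Su)_j \cdot h_{[0,1[}\text{-type kernel} \mid \cal G_j)$ recovers $\indop(u)_j$ for the reverse filtration $\cal G_j$. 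One then applies \eqref{eqn:steins_martingale_inequality} in both directions: once to pass from $\{(u)_j\}$ to $\{\cond(\cdot \mid \cal F_j)\}$ for the upper bound $\|Su\| \leq C\|u\|$, and once with the roles reversed for the lower bound.

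The main obstacle I anticipate is making the identification ``$(Su)_j$ is a conditional expectation of a function controlled by $(u)_j$'' precise and reversible. The shifted Haar function $h_{\sigma(I)}$ is supported on $\sigma(I) = \omega(I) \cup (\omega(I) + \sign(s(I))|\omega(I)|)$ by Lemma~\ref{lem:one-third-shift}\eqref{enu:one-third-shift-6}, and the second piece of $\sigma(I)$ lies outside $I$, in a neighboring dyadic interval $I'$ of the same level; so $(Su)_j$ is not simply an operation on the level-$j$ data localized to $I$, but mixes neighbors. The fix is bookkeeping: group the level-$j$ intervals according to the $\sigma$--shift direction $(-1)^j$ (which depends only on the parity of $j$, by \eqref{eqn:shift_width--1}) and observe that the mixing is a shift by one dyadic step, which is itself a bounded operator on $\ell^p$--indexed Rademacher sums via Kahane's contraction principle \eqref{eqn:kahanes_contraction_principle} after passing to the indicator functions as in \eqref{eqn:indop-isometry}. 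Once the mixing is absorbed, the remaining map $(u)_j \mapsto (\text{the part of } Su \text{ honestly inside level-}j\text{ atoms})$ is genuinely a conditional expectation with respect to $\cal F_j$, with constant of comparability $3$ coming from $|\sigma(\dint_{j+1}) \cap I| = 3$ versus $|\{J \in \sigma(\dint_{j+1}) : J \subset I\}| = 1$, and Stein's inequality closes the argument. I would present the upper estimate in full and remark that the lower estimate follows by the symmetric choice of filtration, since $\sigma^{-1}$ enjoys analogous separation properties.
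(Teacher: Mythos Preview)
Your overall framework is right --- randomize via the $\umd$--property, apply Stein's inequality~\eqref{eqn:steins_martingale_inequality}, and close with Kahane --- but the key step, realizing $(Su)_j$ as a conditional expectation of something controlled by $(u)_j$, does not go through as you describe. With $\cal F_j$ generated by $\sigma(\dint_k)$, $k\le j$, the atoms at level $j$ are the intervals of $\sigma(\dint_j)$, so $h_{\sigma(I)}$ (which changes sign at the midpoint of $\sigma(I)\in\sigma(\dint_j)$) is not even $\cal F_j$--measurable, and hence cannot appear as $\cond(h_I\mid\cal F_j)$. If instead one computes $\cond(\indop(u)_j\mid\cal F_j)$ on the atom $\sigma(I)$, one finds $\tfrac{2}{3}u_I|I|^{-1}+\tfrac{1}{3}u_{I'}|I|^{-1}$, a genuine mixture with the neighbouring coefficient $u_{I'}$. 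Your proposed remedy --- that this neighbour mixing ``is itself a bounded operator on $\ell^p$--indexed Rademacher sums via Kahane's contraction principle'' --- is the gap: Kahane's principle~\eqref{eqn:kahanes_contraction_principle} controls pointwise scalar multiplication $x_j\mapsto c_j x_j$, not reindexings or shifts. Controlling the contribution of $u_{I'}$ inside the randomized norm is essentially the shift estimate you are trying to prove, so the argument is circular.

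The paper sidesteps the mixing entirely by inserting an auxiliary function built on $\omega$. You already quote the relevant fact, $\omega(I)\subset I$; use it by setting $v=\sum_I u_I\,h_{\omega(I)}\,|\omega(I)|^{-1}$. Because each $\omega(I)$ lies entirely inside $I$ and the $\omega(I)$ at a fixed level are pairwise disjoint, one gets the \emph{exact} identity $\indop(u)_j=\cond(\indop(v)_j\mid\dint_j)$, with no neighbour terms at all. Stein then gives $\int\|\sum_j r_j\,\indop(u)_j\|\,\mathrm dt\lesssim\int\|\sum_j r_j\,\indop(v)_j\|\,\mathrm dt$. The passage from $v$ to $Su$ is where Kahane is legitimately applicable: since $\omega(I)\subset\sigma(I)$ by Lemma~\ref{lem:one-third-shift}\eqref{enu:one-third-shift-6}, at every point one has $\indop(v)_j=2c_j\cdot\indop(Su)_j$ with $c_j\in\{0,1\}$, a pointwise scalar bound, so Kahane yields a factor~$2$ and the $\umd$--property closes the estimate $\|u\|\lesssim\|Su\|$. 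The converse inequality follows by the same argument with the roles of $\dint$ and $\sigma(\dint)$ interchanged.
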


\begin{proof}
  Let $u = \sum_{I \in \dint} u_I\, h_I\, |I|^{-1} \in L_X^p$ be
  fixed throughout this proof and set
  \begin{equation*}
    v = \sum_{I \in \dint}
    u_I\, h_{\omega(I)}\, |\omega(I)|^{-1}.
  \end{equation*}
  Note that $\{\omega(I)\, :\, I \in \dint\}$ is nested, see
  Lemma~\ref{lem:one-third-shift},
  assertion~\eqref{enu:one-third-shift-1}
  and~\eqref{enu:one-third-shift-2}.
  Observe we have due to Lemma~\ref{lem:one-third-shift},
  assertion~\eqref{enu:one-third-shift-3} that
  $\indop (u)_j = \cond (\indop(v)_j | \dint_j)$, so the
  $\umd$--property and Kahane's contraction
  principle~\eqref{eqn:kahanes_contraction_principle} yield
  \begin{align*}
    \|u\|_{L_X^p}
    & \lesssim \int_0^1
    \|\sum_{j \in \bb Z} r_j(t)\, (u)_j \|_{L_X^p}\, \mathrm dt\\
    & = \int_0^1
    \|\sum_{j \in \bb Z} r_j(t)\, \indop (u)_j \|_{L_X^p}\,
    \mathrm dt\\
    & = \int_0^1 \|
    \sum_{j \in \bb Z} r_j(t)\, \cond (\indop (v)_j | \dint_j)
    \|_{L_X^p}\, \mathrm dt.
  \end{align*}
  Now we apply Stein's martingale
  inequality~\eqref{eqn:steins_martingale_inequality} followed by
  identity~\eqref{eqn:indop-isometry} to pass from $\indop (v)_j$ to
  $(v)_j$, so
  \begin{align*}
    \int_0^1 \|
    \sum_{j \in \bb Z} r_j(t)\, \cond (\indop (v)_j | \dint_j)
    \|_{L_X^p}\, \mathrm dt
    & \lesssim \int_0^1 \|
    \sum_{j \in \bb Z} r_j(t)\, \indop (v)_j
    \|_{L_X^p}\, \mathrm dt\\
    & = \int_0^1 \|
    \sum_{j \in \bb Z} r_j(t)\, (v)_j
    \|_{L_X^p}\, \mathrm dt.
  \end{align*}
  Recalling definition~\eqref{eqn:one-third-shift-operator} 
  and applying Kahane's contraction principle in consideration of
  $\omega(I) \subset \sigma(I)$
  (see identity~\eqref{enu:one-third-shift-6} in
  Lemma~\ref{lem:one-third-shift}),
  we estimate
  \begin{equation*}
    \int_0^1 \|
    \sum_{j \in \bb Z} r_j(t)\, (v)_j
    \|_{L_X^p}\, \mathrm dt  
    \leq 2\cdot \int_0^1 \|
    \sum_{j \in \bb Z} r_j(t)\, (S u)_j
    \|_{L_X^p}\, \mathrm dt,
  \end{equation*}
  and the $\umd$--property implies
  \begin{equation*}
    \int_0^1 \|
    \sum_{j \in \bb Z} r_j(t)\, (v)_j
    \|_{L_X^p}\, \mathrm dt  
    \lesssim
    \|S u\|_{L_X^p}.
  \end{equation*}
  Thus, collecting the inequalities yields
  \begin{equation*}
    \| u \|_{L_X^p} \lesssim \| S u \|_{L_X^p}.
  \end{equation*}

  One can repeat the preceding argument with the roles of $u$ and
  $S u$ interchanged and obtain the converse inequality
  \begin{equation*}
    \|S u\|_{L_X^p} \lesssim \|u\|_{L_X^p}.
  \end{equation*}
\end{proof}

\subsection{Unilateral One-Third-Shift}\label{ss:unilateral}\hfill

We now introduce modified versions $\sigma_0$ and $\sigma_1$ of
the one-third-shift map $\sigma$.
To this end we define
$\sigma_0, \sigma_1\, :\, \dint \longrightarrow \sigma(\dint)$,
\begin{align}
  \sigma_0(I) &= J,
  \qquad\text{
    where $J \in \sigma(\dint)$, $|J| = |I|$ and $\sup J \in I$},
  \label{eqn:unilateral-map-0}\\
  \sigma_1(I) & = J,
  \qquad\text{
    where $J \in \sigma(\dint)$, $|J| = |I|$ and $\inf J \in I$},
  \label{eqn:unilateral-map-1}
\end{align}
see Figure~\vref{fig:unilateral_one-third-shift}.
This induces the one-third-shift operators $S_0$ and $S_1$ given by
the linear extension of
\begin{align}
  S_0(h_I) & = h_{\sigma_0(I)}, \qquad I \in \dint,
  \label{eqn:unilateral-operator-0}\\
  S_1(h_I) & = h_{\sigma_1(I)}, \qquad I \in \dint.
  \label{eqn:unilateral-operator-1}
\end{align}
\begin{figure}[bt]
  \begin{center}
    \begin{pspicture}(-2,.5)(10.5,-2.5)

      \newcommand{\interval}[1]{%
        \psline[linewidth=1.2pt,tbarsize=0.15]{|*-|*}(#1,0)}

      \rput(3,0){\interval{4.5}}
      \rput(0,-2){\multirput(4.5,0){2}{\interval{4.5}}}
      \pnode(5.25,0){orig}
      \pnode(2.25,-2){left}
      \pnode(6.75,-2){right}
      \ncline[nodesep=.25]{->}{orig}{left}
      \ncput*{$\sigma_0$}
      \ncline[nodesep=.25]{->}{orig}{right}
      \ncput*{$\sigma_1$}
      \uput[90](5.25,0){$I$}
      \uput[-90](2.25,-2){$\sigma_0(I)$}
      \uput[-90](6.75,-2){$\sigma_1(I)$}
    \end{pspicture}
  \end{center}
  \caption{Unilateral one-third-shifts $\sigma_0$ and $\sigma_1$
    applied to $I \in \dint$.
    In this picture the one-third-shift map $\sigma$ shifts to the
    right, so $\sigma_1(I) = \sigma(I)$.}
  \label{fig:unilateral_one-third-shift}
\end{figure}

Observe that we have either
\begin{equation*}
  \sigma(I) = \sigma_0(I)
  \qquad \text{or}\qquad
  \sigma(I) = \sigma_1(I),
\end{equation*}
depending on the direction in which $\sigma$ one-third-shifts the
interval $I$.
Anyhow, we can see that
\begin{align*}
  |I \isect \sigma_0(I)| & \geq \frac{1}{3}\, |I|, &
  |I \isect \sigma_1(I)| & \geq \frac{1}{3}\, |I|,
\end{align*}
for all $I \in \dint$.
The proof of Theorem~\vref{thm:one-third-trick-isomorphism} with minor
modifications yields
Theorem~\ref{thm:modified-one-third-trick-isomorphism} below.
\begin{thm}\label{thm:modified-one-third-trick-isomorphism}
  Let $1 < p < \infty$ and $X$ a Banach space with the
  $\umd$--property, then there exists a constant $C > 0$ such that
  \begin{align*}
    \frac{1}{C}\, \big\| u \big\|_{L_X^p}
    \leq \big\| S_0 & u \big\|_{L_X^p}
    \leq C\, \big\| u \big\|_{L_X^p},\\
    \frac{1}{C}\, \big\| u \big\|_{L_X^p}
    \leq \big\| S_1 & u \big\|_{L_X^p}
    \leq C\, \big\| u \big\|_{L_X^p},
  \end{align*}
  for all $u \in L_X^p$.
  The constant $C$ depends only on $\umdconst_p(X)$.
\end{thm}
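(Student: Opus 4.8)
The plan is to mirror the proof of Theorem~\ref{thm:one-third-trick-isomorphism} almost verbatim, replacing the map $\omega$ by suitable unilateral analogues $\omega_0$ and $\omega_1$. Recall that the essential structural input in the bilateral proof was: (a) the collection $\{\omega(I) : I \in \dint\}$ is nested, (b) $\omega(I) \subset I$, so that $\indop(u)_j = \cond(\indop(v)_j \mid \dint_j)$ where $v = \sum_I u_I\, h_{\omega(I)}\, |\omega(I)|^{-1}$, and (c) $\omega(I) \subset \sigma(I)$, so that Kahane's contraction principle passes from $(v)_j$ to $(Su)_j$ at the cost of a factor $2$. For the unilateral case I would, for each $I \in \dint$, let $\omega_0(I)$ be the unique $J \in \sigma(\dint)$ with $|J| = |I|/2$ and $J \subset \sigma_0(I)$, and similarly $\omega_1(I) \subset \sigma_1(I)$ with $|J| = |I|/2$. (Existence and uniqueness follow exactly as for $\omega$, since $\sigma_0(I), \sigma_1(I) \in \sigma(\dint)$ each contain precisely one dyadically-shifted child of half their size — alternatively one may simply take $\omega_\epsilon(I) = \omega(\sigma_\epsilon(I) \text{-preimage})$, but it is cleaner to argue directly.)

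Next I would verify the three structural facts for $\omega_0$ (and symmetrically $\omega_1$). Nestedness of $\{\omega_0(I) : I \in \dint\}$: since $\sigma(\dint)$ is nested and $\omega_0(I) \in \sigma(\dint)$ for each $I$, any two of these intervals are either disjoint or nested; to exclude equality for distinct $I, J$ one observes, as in Lemma~\ref{lem:one-third-shift}\eqref{enu:one-third-shift-5}, that intervals of the same generation get sent to intervals that are far apart, and intervals of different generations are handled by the one-sixth gap. The containment $\omega_0(I) \subset I$ does \emph{not} hold in general — $\sigma_0(I)$ sticks out of $I$ to the left — so this is the point where a genuine modification is needed. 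However, what we actually use is only that $\charfun_{\omega_0(I)}$, after conditioning onto $\dint_j$, reproduces $\charfun_I |I|^{-1}$ up to the $\umd$/Stein machinery. Here I would instead invoke the bound $|I \isect \sigma_0(I)| \geq \tfrac13 |I|$, hence $|I \isect \omega_0(I)| \geq$ a fixed fraction of $|\omega_0(I)|$, and run the argument with $\cond(\cdot \mid \dint_j)$ applied to $v_0 = \sum_I u_I\, h_{\omega_0(I)}\,|\omega_0(I)|^{-1}$: the conditional expectation of $\indop(v_0)_j$ onto $\dint_j$ is $\sum_{I \in \dint_j} u_I\, \tfrac{|I \cap \omega_0(I)| - |I^c \cap \omega_0(I)| \cdot 0}{|\omega_0(I)|}\cdots$ — more precisely one computes $\cond(h_{\omega_0(I)}/|\omega_0(I)| \mid \dint_j)$ explicitly and finds it equals $c_I\, \charfun_I / |I|$ with $|c_I|$ bounded below and above by absolute constants, so Kahane's contraction principle (in both directions) again relates $\indop(u)_j$ to $\cond(\indop(v_0)_j \mid \dint_j)$.

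With those facts in hand the chain of estimates is identical to the proof of Theorem~\ref{thm:one-third-trick-isomorphism}: Kahane plus $\umd$ to pass from $\|u\|_{L_X^p}$ to $\int_0^1 \|\sum_j r_j(t) \cond(\indop(v_0)_j \mid \dint_j)\|$, then Stein's inequality~\eqref{eqn:steins_martingale_inequality} and the identity~\eqref{eqn:indop-isometry} to reach $\int_0^1\|\sum_j r_j(t)(v_0)_j\|$, then Kahane's contraction principle using $\omega_0(I) \subset \sigma_0(I)$ to reach $\int_0^1\|\sum_j r_j(t)(S_0 u)_j\|$ at the cost of a factor $2$, and finally $\umd$ once more to arrive at $\|S_0 u\|_{L_X^p}$. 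Reversing the roles of $u$ and $S_0 u$ gives the converse inequality, and the argument for $S_1$ is the mirror image. The main obstacle, and the only place real thought is needed beyond copying, is the replacement of the clean containment $\omega(I)\subset I$ by the quantitative overlap estimate and the corresponding explicit computation of $\cond(h_{\omega_\epsilon(I)}|\omega_\epsilon(I)|^{-1} \mid \dint_j)$, showing its coefficient stays bounded away from $0$ and $\infty$; everything else is routine bookkeeping.
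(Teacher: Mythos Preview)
Your overall plan is right, but the workaround you propose for the failure of $\omega_0(I)\subset I$ does not go through, and the paper's fix is both simpler and different from what you sketch.

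First a minor point: your $\omega_0(I)$ is not well defined. An interval $\sigma_0(I)\in\sigma(\dint)$ has \emph{two} children of size $|I|/2$ in $\sigma(\dint)$, not one; the uniqueness in~\eqref{eqn:associate} came from requiring $J\subset I$ with $I\in\dint$, not $J\subset\sigma_0(I)$.

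The real gap is the conditional--expectation step. If $\omega_0(I)$ protrudes from $I$ into its left neighbour $I'=\tau_{-1}(I)$ (which it does on every second level, whichever half of $\sigma_0(I)$ you pick), then
\[
\cond\big(\charfun_{\omega_0(I)}\,\big|\,\dint_j\big)
=\frac{|\omega_0(I)\cap I|}{|I|}\,\charfun_I
+\frac{|\omega_0(I)\cap I'|}{|I'|}\,\charfun_{I'},
\]
which is \emph{not} of the form $c_I\,\charfun_I$. Consequently $\cond(\indop(v_0)_j\mid\dint_j)$ is not a scalar multiple of $\indop(u)_j$; on each $K\in\dint_j$ it mixes the coefficients $u_K$ and $u_{\tau_1(K)}$. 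Kahane's contraction principle cannot undo this mixing, so the chain of inequalities breaks exactly at the point you flagged as ``the only place real thought is needed''.

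The paper avoids the difficulty altogether rather than repairing it: it takes $\omega_\delta(I)$ of length $|I|/4$ at the \emph{inner} end of $\sigma_\delta(I)$, namely
\[
\omega_0(I)\in\sigma(\dint),\quad |\omega_0(I)|=|I|/4,\quad \sup\omega_0(I)=\sup\sigma_0(I),
\]
and symmetrically $\inf\omega_1(I)=\inf\sigma_1(I)$. Since $|I\cap\sigma_\delta(I)|\geq|I|/3>|I|/4$, this forces $\omega_\delta(I)\subset I\cap\sigma_\delta(I)$. With that choice one genuinely has $\indop(u)_j=\cond(\indop(v_\delta)_j\mid\dint_j)$ and $\omega_\delta(I)\subset\sigma_\delta(I)$, so the proof of Theorem~\ref{thm:one-third-trick-isomorphism} carries over verbatim (the factor~$2$ in the Kahane step becoming a factor~$4$).
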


\begin{proof}
  Define $\omega_0$ and $\omega_1$ by
  \begin{align*}
    \omega_0(I) & = J,
    \qquad \text{where $J \in \sigma(\dint)$, $|J| = |I|/4$ and
      $\sup J = \sup \sigma_0(I)$},\\
    \omega_1(I) & = J,
    \qquad \text{where $J \in \sigma(\dint)$, $|J| = |I|/4$ and
      $\inf J = \inf \sigma_1(I)$},
  \end{align*}
  for all $I \in \dint$.
  Now all we need to do is repeat the proof of
  Theorem~\ref{thm:modified-one-third-trick-isomorphism} with $\omega$
  replaced by $\omega_\delta$ in order to estimate
  $S_\delta$, for each $\delta \in \{0,1\}$.
\end{proof}
\bigskip
\section{The Shift Operator $T_m$}\label{s:shift-tm}

Here we define $16 + 4\cdot \log_2(|m|)$, $m \neq 0$ collections of
the Haar system, so that on each such subcollection $T_m$ acts as a
martingale transform operator on either the dyadic grid or the
one-third-shifted dyadic grid.
In section~\ref{s:one-third-trick} we established that changing the dyadic grid
to the one-third-shifted dyadic grid is an isomorphism.
Thus we may assume that $T_m$ is representable as a martingale transform
operator on each of the $16 + 4\cdot \log_2(|m|)$ subcollections, which yields
the well known estimate
\begin{equation*}
\|T_m\, :\, L_X^p \rightarrow L_X^p\|
\leq C\cdot \big( \log_2 (2 + |m|) \big)^\alpha,
\end{equation*}
for some $0 < \alpha < 1$, established by T. Figiel in~\cite{figiel:1988}.

\bigskip

Define the shift map $\tau_m$, $m \in \bb Z$  by
\begin{equation}\label{eqn:shift-taum}
  \tau_m(I) = I + m\, |I|,
\end{equation}
for all $I \in \dint \union \sigma(\dint)$.
This induces the shift operator $T_m$, given by
\begin{equation}\label{eqn:shift-tm}
  T_m h_I = h_{\tau_m(I)},
\end{equation}
for all $I \in \dint \union \sigma(\dint)$.
It is crucial that the one--third--shift operator $S$ defined
in~\eqref{eqn:one-third-shift-operator} and the shift operator
$T_m$ commute, that is the identity
\begin{equation}\label{eqn:one-third-shift_and_tm_commute}
  (S \circ T_m) (u) = (T_m \circ S) (u),
\end{equation}
for all $u \in L_X^p$.
Analogously, we have that
\begin{align}
  (S_0 \circ T_m) (u) & = (T_m \circ S_0) (u),
  \label{eqn:unilateral_one-third-shift_and_tm_commute-0}\\
  (S_1 \circ T_m) (u) & = (T_m \circ S_1) (u),
  \label{eqn:unilateral_one-third-shift_and_tm_commute-1}
\end{align}
for all $u \in L_X^p$, see~\eqref{eqn:unilateral-map-0},
\eqref{eqn:unilateral-map-1}, \eqref{eqn:unilateral-operator-0} and
\eqref{eqn:unilateral-operator-1}.

We aim at splitting the dyadic intervals $\dint$ into collections
$\scr B_i^{(\delta)}$, such that we may bound $T_m\circ S^\delta$ on
functions supported on
$\sigma^\delta\big(\scr B_i^{(\delta)}\big)$, $\delta \in \{0,1\}$.
Note that if $\delta = 0$, then  $S^\delta = \Id$ and
$\sigma^\delta = \Id$.

\bigskip

Given a shift width $m \in \bb Z$, $m \neq 0$, we will partition the dyadic
intervals $\dint$ into $16 + 4 \cdot \log_2(|m|)$ disjoint collections denoted
by $\scr B_i^{(\delta)}$.
The collections are constructed in such way that for each
$i$ and $\delta \in \{0,1\}$ fixed, we have that whenever
$I \in \scr B_i^{(\delta)}$, the intervals $\sigma^\delta(I)$ and
$\big( \tau_m \circ \sigma^\delta\big)(I)$ share the same dyadic
predecessor with respect to the collection
$\sigma^\delta\big(\scr B_i^{(\delta)}\big)$.
The details are elaborated in Lemma~\ref{lem:shift-lemma} below.
\begin{lem}\label{lem:shift-lemma}
  For every integer $m \in \bb Z$, $m \neq 0$ let $\tau_m$ denote the
  map given by
  \begin{equation*}
    \tau_m(I) = I + m\, |I|,
  \end{equation*}
  for all $I \in \dint \union \sigma(\dint)$,
  see~\eqref{eqn:shift-taum}.

  Then there exist a constant $K(m) \leq 7 + 2\cdot \log_2(|m|)$ and
  disjoint collections of dyadic intervals
  $\scr B_i^{(\delta)}$, $0 \leq i \leq K(m)$, $\delta \in \{0, 1\}$
  with
  \begin{equation*}
    \dint
    = \Union_{\delta \in \{0, 1\}} \Union_{i = 0}^{K(m)}
    \scr B_i^{(\delta)},
  \end{equation*}
  such that
  \begin{equation}\label{eqn:shift-lemma-nested_collection}
    \big\{ I,\, \tau_m(I),\, I \union \tau_m(I)\, :\,
    I \in \sigma^{\delta}(\scr B_i^{(\delta)}) \big\}
  \end{equation}
  is a nested collection of sets, for all $0 \leq i \leq K(m)$ and
  $\delta \in \{0, 1\}$.
\end{lem}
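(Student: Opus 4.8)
The plan is to reduce the statement to a purely combinatorial fact about dyadic intervals on a single scale, then handle the interaction between neighbouring scales by a colouring argument. First I would observe that it suffices to treat each scale separately in the following sense: since all intervals in $\sigma^\delta(\scr B_i^{(\delta)})$ appearing together in \eqref{eqn:shift-lemma-nested_collection} with their $\tau_m$--translates are dyadic (or one-third-shifted dyadic) of various sizes, and translation by $m|I|$ preserves the size $|I|$, the only obstruction to nestedness is that two sets of \emph{comparable but different} sizes overlap partially, or two sets of the same size overlap partially. The second possibility is controlled: if $I,\tau_m(I)$ and $J,\tau_m(J)$ all have size $2^{-j}$ and all sit inside a common predecessor, then $I\cup\tau_m(I)$ is an interval of length $(|m|+1)2^{-j}$, so I would want to group the scale-$j$ intervals into $O(|m|)$ classes so that within a class the blocks $I\cup\tau_m(I)$ are pairwise either disjoint or equal — this is exactly a one-third-trick type spacing argument, and the factor $16+4\log_2|m|$ strongly suggests that instead one colours \emph{levels} $j$ modulo roughly $\log_2|m|$ together with a bounded number of residues, so that two intervals that could clash get different colours $i$.

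Concretely, I would set $\ell = \lceil \log_2 |m|\rceil$ and partition $\bb Z$ (the set of scales) into residue classes $j \equiv r \pmod{\ell+c}$ for a small absolute constant $c$; within a fixed residue class, if $j < j'$ then $2^{-j'}$ divides $2^{-j}/|m|$, so any scale-$j'$ block $J\cup\tau_m(J)$ of length $(|m|+1)2^{-j'} \le 2 |m| 2^{-j'} \le 2^{-j}$ is short enough to fit inside one scale-$j$ dyadic interval, hence it is either contained in or disjoint from each scale-$j$ block $I\cup\tau_m(I)$ — \emph{provided} it does not straddle the boundary of such a block. Straddling is the genuine difficulty. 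To rule it out I would further split each residue class into a bounded number of sub-collections $\scr B_i^{(\delta)}$ according to the position of $I$ relative to its $\ell$-fold dyadic ancestor: the endpoints of the block $I\cup\tau_m(I)$ lie within $(|m|+1)2^{-j}$ of $\inf I$, which is a dyadic rational of denominator $2^{-j}$; choosing the ancestor scale far enough above $j$ (distance $\ge \ell+2$) forces every block at every \emph{lower} scale in the same $\scr B_i^{(\delta)}$ to be nested inside or disjoint from a fixed dyadic interval that contains $I\cup\tau_m(I)$ and shares endpoints with the ancestor grid. Counting residues times the bounded number of positional classes, and then doubling for the sign of $s(I)$ which enters through $\sigma^\delta$, gives the bound $K(m)\le 7 + 2\log_2|m|$ after optimising the constants.

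The main obstacle, as indicated, is the straddling/boundary case: one must show that after the colouring no block $I\cup\tau_m(I)$ has its two endpoints lying on opposite sides of the boundary of a larger block $J\cup\tau_m(J)$ of the same colour. I would handle this by working on the two grids $\dint$ and $\sigma(\dint)$ separately — they never intersect by Lemma~\ref{lem:one-third-shift}\eqref{enu:one-third-shift-1}, and for $J\in\sigma(\dint)$ contained in $I\in\dint$ one has $\dist(J,I^c)\ge |J|/3$, which provides exactly the cushion needed to absorb the shift by $m|J|$ once $|J|$ is small enough relative to the separation, i.e. once the scales differ by more than $\log_2|m|+O(1)$. For $\delta = 1$ one uses $\sigma^\delta = \sigma$ and the same separation estimate; for $\delta = 0$ one has $\sigma^\delta = \Id$ and the argument is the plain dyadic one. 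Assembling these pieces and bookkeeping the count of colours is routine once the separation inequality is set up, so I would present the colouring explicitly, verify nestedness scale-by-scale within a colour using the divisibility $2^{-j'}\mid 2^{-j}/|m|$, and close by the endpoint/cushion estimate that kills straddling.
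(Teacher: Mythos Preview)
Your outline has the right skeleton---partition the scales into residue classes modulo roughly $\log_2|m|$ and invoke the one-third-shift for the boundary cases---but two steps do not go through as written. First, the cushion you invoke is the wrong one. For $J\in\sigma(\dint)$ with $J\subset I\in\dint$ the inequality $\dist(J,I^c)\ge|J|/3$ is correct, but $|J|/3<m|J|$ for every integer $m\ge1$, so this distance never absorbs the shift $\tau_m$, no matter how far apart the scales of $I$ and $J$ are. The mechanism that actually works is the opposite one: one applies $\sigma$ to \emph{both} $J$ and its $\lambda$-fold predecessor $I$, and uses that the predecessor moves by $|I|/3$ while $J$ moves only by $|J|/3$. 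The displacement of the boundary of the (shifted) predecessor relative to $J$ is therefore of order $|I|/3$, and since $m|J|<2^{\lambda-2}\cdot2^{-\lambda}|I|=|I|/4$ this leaves enough room for $\tau_m(\sigma(J))$ to remain inside. In the paper this is the content of the inclusion $\sigma\big(\scr C_1(\scr A_i,I)\big)\subset\scr C_0\big(\sigma(\scr A_i),\sigma(I)\big)$. Correspondingly, $\delta$ does not encode the sign of $s(I)$: it records whether $J$ and $\tau_m(J)$ already share their $\lambda$-fold predecessor in $\dint$ (the case $\delta=0$, coming from $\scr C_0$) or only do so after passing to $\sigma(\dint)$ (the case $\delta=1$, coming from $\scr C_1$).

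Second, the same-scale obstruction is dropped without being resolved. You correctly observe that at a fixed scale one must rule out $I=\tau_m(J)$ (which forces $I\cup\tau_m(I)$ and $J\cup\tau_m(J)$ to share exactly one constituent interval, violating nestedness), propose $O(|m|)$ positional classes, and then replace this by a colouring of the levels because of the $\log|m|$ budget. But colouring the levels assigns all intervals of a given size the same colour and therefore does nothing about $I=\tau_m(J)$. The paper handles this by an independent $2$-colouring of each level into blocks of width $m$ (the collections $\dint_{\text{even}}$ and $\dint_{\text{odd}}$), which guarantees $\scr B_i^{(\delta)}\cap\tau_m\big(\scr B_i^{(\delta)}\big)=\emptyset$; this is the factor that turns the $L(m)+1=\lambda$ residue classes into $K(m)+1=2\lambda$ collections and produces the stated bound $K(m)\le7+2\log_2|m|$.
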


\begin{proof}
  Due to symmetry we may assume that $m \geq 1$, and we set
  $K(m)=K(-m)$, if $m \leq -1$. So fix a shift width $m \geq 2$ and a
  $\lambda \geq 4$ such that
  \begin{equation}\label{eqn:dyadic_shift_width}
    2^{\lambda-3} \leq m < 2^{\lambda-2},
  \end{equation}
  and define $L(m) = \lambda-1$.
  If $m = 1$, then let $\lambda = 4$ and set $L(1) = 3$.
  Now we split $\dint$ into disjoint collections
  $\scr A_i$, $0 \leq i \leq L(m)$, by omitting $L(m)$ consecutive
  levels of $\dint$.
  More precisely, for every $0 \leq i \leq L(m)$ we define
  \begin{equation}\label{eqn:splitting_sets}
    \scr A_i = \Union_{j \in \bb Z}
      \big\{ I \in \dint\, :\,
        |I| = 2^{-(\lambda \cdot j + i)}
      \big\}.
  \end{equation}

  Next we want to divide each of the $\scr A_i$ into two collections
  $\scr A_i^{(0)}$ and $\scr A_i^{(1)}$, such that
  every $I \in \scr A_i^{(0)}$ has the same predecessor in
  $\scr A_i^{(0)}$ as $\tau_m(I)$, and $\scr A_i^{(0)}$ is maximal.
  As a consequence, the collection $\scr A_i^{(1)}$ consists all
  intervals $I$ such that $I$ and $\tau_m(I)$ do \emph{not} share the same
  predecessor. But, if we apply the one--third--shift map $\sigma$ to
  the collection $\scr A_i^{(1)}$, then every
  $I \in \sigma \big( \scr A_i^{(1)} \big)$ has the same predecessor
  in $\sigma \big( \scr A_i^{(1)} \big)$ as $\tau_m(I)$.
  We will now construct these two collections.
  To this end let $\scr G$ denote one of the collections
  $\scr A_i$, $\sigma\big( \scr A_i \big)$, $0 \leq i \leq L(m)$
  and define
  \begin{equation}\label{eqn:lambda_support-1}
    \begin{aligned}
      \scr C_0(\scr G, I)
      & = \big\{
        J \in \scr G\, :\,
        |J| = 2^{-\lambda}\, |I|,\
        J \subset I \text{ and } \tau_m(J) \subset I
      \big\},\\
      \scr C_1(\scr G, I)
      & = \big\{
        J \in \scr G\, :\,
        |J| = 2^{-\lambda}\, |I|,\
        J \subset I \text{ and } \tau_m(J) \isect I = \emptyset
      \big\}.
    \end{aligned}
  \end{equation}
  Revisiting the definition of the one--third--shift
  map~\eqref{eqn:one-third-shift-map} and considering the
  restriction~\eqref{eqn:dyadic_shift_width} one can see that
  \begin{equation}\label{eqn:splitting_sets--shifted-1}
    \sigma \Big( \scr C_1(\scr A_i, I) \Big)
    \subset \scr C_0 \Big( \sigma(\scr A_i), \sigma(I) \Big),
  \end{equation}
  for all $I \in \scr A_i$, $0 \leq i \leq L(m)$.
  This means that all intervals
  $J \in \scr \sigma\big(\scr C_1(\scr A_i, I)\big)$ are such
  that $J$ and $\tau_m(J)$ share $\sigma(I)$ as common predecessor
  with respect to the collection $\sigma \big( \scr A_i^{(1)} \big)$.
  In Figure~\vref{fig:basis_exchange} one can see the action of the
  one--third--shift map $\sigma$ on the collection $\scr A_i$.
  \begin{figure}[bt]
    \begin{center}
      \begin{pspicture}(0,3.5)(12,-4)
        \newcommand{\interval}[1]{%
          \psline[linewidth=1.2pt,tbarsize=0.15]{|*-|*}(#1,0)}

        \interval{8}
        \multirput(6,0)(0.5,0){4}{%
          \interval{0.5}}
        \rput(2.66666667,-1){\interval{8}}
        \multirput(6.16666667,-1)(0.5,0){4}{%
          \interval{0.5}}
        \pnode(6.25,0){top-1}
        \pnode(6.75,0){top-2}
        \pnode(7.25,0){top-3}
        \pnode(7.75,0){top-4}
        \pnode(6.41666667,-1){bottom-1}
        \pnode(6.91666667,-1){bottom-2}
        \pnode(7.41666667,-1){bottom-3}
        \pnode(7.91666667,-1){bottom-4}
        \ncline{->}{top-1}{bottom-1}
        \ncline{->}{top-2}{bottom-2}
        \ncline{->}{top-3}{bottom-3}
        \ncline{->}{top-4}{bottom-4}
        \pcline[linewidth=.6pt,offset=1cm,arrowsize=.1,tbarsize=.15]{|<*->|*}(0,0)(2.66666667,0)
        \naput{$\frac{1}{3}\, 2^{-j}$}
        \pcline[linewidth=.6pt,offset=1cm,arrowsize=.1,tbarsize=.15]{|<*->|*}(2.66666667,0)(6,0)
        \naput{$\frac{5}{12}\, 2^{-j}$}
        \pcline[linewidth=.6pt,offset=1cm,arrowsize=.1,tbarsize=.15]{|<*->|*}(6,0)(6.5,0)
        \naput{$2^{-j-\lambda}$}
        \pcline[linewidth=.6pt,offset=2cm,arrowsize=.1,tbarsize=.15]{|<*->|*}(0,0)(6,0)
        \naput{$\frac{3}{4}\, 2^{-j}$}
        \pcline[linewidth=.6pt,offset=2cm,arrowsize=.1,tbarsize=.15]{|<*->|*}(6,0)(8,0)
        \naput{$\frac{1}{4}\, 2^{-j}$}
        \pcline[linewidth=.6pt,offset=1cm,arrowsize=.1,tbarsize=.15]{|<*->|*}(8,0)(10.66666667,0)
        \naput{$\frac{1}{3}\, 2^{-j}$}
        \rput(0,0.5){%
        \pcline[linewidth=.6pt,offset=-1cm,arrowsize=.1,tbarsize=.15]{|*-|*}(6,-1)(6.16666667,-1)
        \uput{0}[180](6.1,-1.9){$\frac{1}{3}\, 2^{-j-\lambda}$}
        \pcline[linewidth=.6pt,offset=-1cm,arrowsize=.1,tbarsize=.15]{|<*->|*}(6.16666667,-1)(6.66666667,-1)
        \uput{0}[0](6.16666667,-2.4){$2^{-j-\lambda}$}}
        \pcline[linewidth=.6pt,offset=-1.5cm,arrowsize=.1,tbarsize=.15]{|<*->|*}(2.66666667,-1)(6.16666667,-1)
        \nbput{$\frac{5}{12}\, 2^{-j} + \frac{1}{3}2^{-j-\lambda}$}
        \pcline[linewidth=.6pt,offset=-1.5cm,arrowsize=.1,tbarsize=.15]{|<*->|*}(6.16666667,-1)(8.16666667,-1)
        \nbput{$\frac{1}{4}\, 2^{-j}$}
        \pcline[linewidth=.6pt,offset=-1.5cm,arrowsize=.1,tbarsize=.15]{|<*->|*}(8.16666667,-1)(10.66666667,-1)
        \nbput{$\frac{1}{3}\, (2^{-j} - 2^{-j-\lambda})$}
        \psline[linestyle=dotted](0,0)(0,2)
        \psline[linestyle=dotted](2.66666667,-2.5)(2.66666667,1)
        \psline[linestyle=dotted](6,-1.5)(6,2)
        \psline[linestyle=dotted](6.16666667,-2.5)(6.16666667,-1)
        \psline[linestyle=dotted](6.5,0)(6.5,1)
        \psline[linestyle=dotted](6.66666667,-1.5)(6.66666667,-1)
        \psline[linestyle=dotted](8,0)(8,2)
        \psline[linestyle=dotted](8.16666667,-2.5)(8.16666667,-1)
        \psline[linestyle=dotted](10.66666667,-2.5)(10.66666667,1)
        \uput{0}[90](7.08333333,-0.6){$\sigma$}
        \pcline[linewidth=.6pt,offset=3,arrowsize=.1,tbarsize=.15]{|<*->|*}(0,0)(8,0)
        \naput{$I$}
        \pcline[linewidth=.6pt,offset=-3.5,arrowsize=.1,tbarsize=.15]{|<*->|*}(2.66666667,0)(10.66666667,0)
        \nbput{$\sigma(I)$}
        \pcline[linewidth=.6pt,offset=.5,arrowsize=.1,tbarsize=.15]{|<*->|*}(7,0)(7.5,0)
        \naput{$J$}
        \pcline[linewidth=.6pt,offset=-1.5,arrowsize=.1,tbarsize=.15]{|<*->|*}(7.16666667,0)(7.66666667,0)
        \nbput{\quad $\sigma(J)$}
        \psline[linestyle=dotted](0,2)(0,3)
        \psline[linestyle=dotted](8,2)(8,3)
        \psline[linestyle=dotted](7,0)(7,.5)
        \psline[linestyle=dotted](7.5,0)(7.5,.5)
        \psline[linestyle=dotted](7.16666667,-1)(7.16666667,-1.5)
        \psline[linestyle=dotted](7.66666667,-1)(7.66666667,-1.5)
        \psline[linestyle=dotted](2.66666667,-2.5)(2.66666667,-3.5)
        \psline[linestyle=dotted](10.66666667,-2.5)(10.66666667,-3.5)
      \end{pspicture}
    \end{center}
    \caption{The one--third--shift map $\sigma$ acting on
      $I \in \dint$, $|I|=2^{-j}$ and $J \in \dint$,
      $|J|=2^{-j-\lambda}$, where $J \subset I$ and
      $\tau_m(J) \isect I = \emptyset$.
      In this picture $\lambda$ is even.}
    \label{fig:basis_exchange}
  \end{figure}
  Now define for every $0 \leq i \leq L(m)$ the following collections
  of dyadic intervals
  \begin{equation}\label{eqn:splitting_sets--shifted-2}
    \begin{aligned}
      \scr A_i^{(0)}
      & = \Union \big\{
        \scr C_0(\scr A_i, I)\, :\, I \in \scr A_i
        \big\},\\
      \scr A_i^{(1)}
      & = \scr A_i \setminus \scr A_i^{(0)}.
    \end{aligned}
  \end{equation}

  Finally, for all $0 \leq i \leq L(m)$ and $\delta \in \{0,1\}$ we
  split $\scr A_i^{(\delta)}$ into two disjoint collections
  \begin{equation}\label{eqn:splitting_sets--shifted-3}
    \scr B_i^{(\delta)}
    \quad \text{and} \quad
    \scr B_{i+L(m)+1}^{(\delta)},
  \end{equation}
  such that
  \begin{equation}\label{eqn:splitting_sets--shifted-4}
    \scr B_i^{(\delta)} \isect \tau_m\big(\scr B_i^{(\delta)}\big)
    = \emptyset,
  \end{equation}
  for all $0 \leq i \leq K(m)$ and $\delta \in \{0,1\}$, where we set
  $K(m) = 2\cdot L(m) + 1$.
  Considering~\eqref{eqn:dyadic_shift_width} and $L(m) = \lambda - 1$
  we find that $K(m) \leq 7 + 2 \cdot \log_2(m)$.
  For this purpose consider the collection
  \begin{equation*}
    \scr E = \big\{ \tau_k(I) \, :\,
    I \in \dint,\, \inf I = 0,\, 0 \leq k \leq m-1 \big\},
  \end{equation*}
  and observe that
  \begin{equation*}
    \dint
    = \Union_{\substack{j \in \bb Z\\\text{$j$ even}}}
    \tau_{j\cdot m} \big( \scr E \big)
    \union
    \Union_{\substack{j \in \bb Z\\\text{$j$ odd}}}
    \tau_{j\cdot m} \big( \scr E \big)
    = \dint_{\text{even}} \union \dint_{\text{odd}}.
  \end{equation*}
  Now define the collections
  \begin{equation}\label{eqn:splitting_sets--shifted-5}
    \begin{aligned}
      \scr B_i^{(\delta)}
      & = \scr A_i^{(\delta)} \isect \dint_{\text{even}},\\
      \scr B_{i+ L(m) + 1}^{(\delta)}
      & = \scr A_i^{(\delta)} \isect \dint_{\text{odd}},
    \end{aligned}
  \end{equation}
  for all $0 \leq i \leq L(m)$ and $\delta \in \{0,1\}$.

  With regard
  to~\eqref{eqn:splitting_sets--shifted-1},
  \eqref{eqn:lambda_support-1} and noting that
  $\tau_m(I) \in \dint_{\text{odd}}$ if and only if
  $I \in \dint_{\text{even}}$, we
  verified~\eqref{eqn:shift-lemma-nested_collection}, finishing this
  proof.
\end{proof}

\begin{rem}\label{rem:strong_shift-lemma}
  Note that we actually proved the slightly stronger result
  \begin{equation}\label{eqn:strong_shift-lemma}
    I \union \tau_m(I) \subset \pred^\lambda(I),
  \end{equation}
  for all $I \in \sigma^{\delta}\big(\scr B_i^{(\delta)}\big)$,
  $0 \leq i \leq K(m)$, $\delta \in \{0,1\}$.
  Conceive the predecessor map $\pred$ with respect to
  $\sigma^\delta \big(\dint\big)$.
  To be more precise let $I \in \sigma^\delta(\dint)$.
  Then $\pred(I)$ is the unique interval $J \in \sigma^\delta(\dint)$
  such that $J \supset I$, and
  $\pred^\lambda = \pred\circ \cdots \circ \pred$.
\end{rem}

As the combinatorial Lemma~\vref{lem:shift-lemma} exhibits the link between the
shift map $\tau_m$, the one--third--shift map $\sigma$ and Figiel's
compatibility condition~\eqref{eqn:shift-lemma-nested_collection},
the subsequent Theorem~\vref{thm:shift-restricted-tm} will translate the
combinatorial results into analytical results, exhibiting the link between the
shift operator $T_m$, the one--third--shift operator $S$ and martingale
transform operators.

In the following context understand that $1 < p < \infty$, $X$ is a Banach space
with the $\umd$--property and $m \in \bb Z$, $m \neq 0$.
Now we define the projections
$P_i^{(\delta)}\, :\, L_X^p \longrightarrow L_X^p$,
associated with the collections $\scr B_i^{(\delta)}$ in Lemma~\vref{lem:shift-lemma}

\begin{equation}\label{eqn:projection-1}
  P_i^{(\delta)} u
  = \sum_{I \in \scr B_i^{(\delta)}}
    \langle u, h_I \rangle\, h_I\, |I|^{-1},
\end{equation}
for all $0 \leq i \leq K(m)$, $\delta \in \{0,1\}$ and $u \in L_X^p$.
The Banach space $X$ having the $\umd$--property implies uniform bounds on the
projections $P_i^{(\delta)}$.
Note the identity
\begin{equation}\label{eqn:projection-2}
  u = \sum_{\delta \in \{0,1\}} \sum_{i=0}^{K(m)} P_i^{(\delta)} u
\end{equation}
holds true for all $u \in L_X^p$, since the collections
$\scr B_i^{(\delta)}$, $0 \leq i \leq K(m)$, $\delta \in \{0,1\}$ form
a partition of $\dint$, see Lemma~\ref{lem:shift-lemma}.

Exploiting that the one--third--shift operator $S$ is an isomorphism
on $L_X^p$ (see Theorem~\ref{thm:one-third-trick-isomorphism}), we
will now estimate the shift operator $T_m$ on the range of each
$P_i^{(\delta)}$ in the subsequent theorem.
\begin{thm}\label{thm:shift-restricted-tm}
  Let $1 < p < \infty$ and $X$ be a Banach space with the
  $\umd$--property.
  Then for every $m \in \bb Z$, $0 \leq i \leq K(m)$ and
  $\delta \in \{0,1\}$ the inequality
  \begin{equation}\label{eqn:shift_estimate-restricted}
    \big\| T_m \circ P_i^{(\delta)}\, u \big\|_{L_X^p}
    \leq C \cdot \big\| P_i^{(\delta)}\, u \big\|_{L_X^p},
  \end{equation}
  holds true for all $u \in L_X^p$, where the
  constant $C$ depends only on $\umdconst_p(X)$.
  The projections
  $P_i^{(\delta)}$, $0 \leq i \leq K(m)$, $\delta \in \{0,1\}$ are
  defined according to~\eqref{eqn:projection-1}, and
  $K(m) \leq 7 + 2\cdot \log_2(1 + |m|)$.
\end{thm}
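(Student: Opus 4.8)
The theorem asserts that on the range of each projection $P_i^{(\delta)}$ the shift operator $T_m$ acts boundedly with constant depending only on $\umdconst_p(X)$. The strategy is to combine the combinatorial content of Lemma~\ref{lem:shift-lemma} (and Remark~\ref{rem:strong_shift-lemma}) with the fact that $S$ is an isomorphism on $L_X^p$ (Theorem~\ref{thm:one-third-trick-isomorphism} and Theorem~\ref{thm:modified-one-third-trick-isomorphism}). First I would reduce to the case $\delta=1$: since $T_m$ commutes with $S^\delta$ by~\eqref{eqn:one-third-shift_and_tm_commute}--\eqref{eqn:unilateral_one-third-shift_and_tm_commute-1}, and $S^\delta$ is an isomorphism with constant controlled by $\umdconst_p(X)$, it suffices to estimate $S^\delta\circ T_m\circ P_i^{(\delta)} = T_m\circ S^\delta\circ P_i^{(\delta)}$; the function $S^\delta P_i^{(\delta)}u$ is supported on the Haar system indexed by $\sigma^\delta(\scr B_i^{(\delta)})$, so the whole problem is transferred to showing that $T_m$ is bounded on functions whose Haar support is $\sigma^\delta(\scr B_i^{(\delta)})$. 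When $\delta=0$ this support is just $\scr B_i^{(0)}\subset\dint$; when $\delta=1$ it is $\sigma(\scr B_i^{(1)})\subset\sigma(\dint)$. In both cases write $\scr G_i = \sigma^\delta(\scr B_i^{(\delta)})$, a subcollection of the nested grid $\scr N = \sigma^\delta(\dint)$.

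**Realizing $T_m$ as a martingale transform.** The key point supplied by Lemma~\ref{lem:shift-lemma} is that $\{I,\ \tau_m(I),\ I\cup\tau_m(I)\ :\ I\in\scr G_i\}$ is a nested collection of sets, and by Remark~\ref{rem:strong_shift-lemma} in fact $I\cup\tau_m(I)\subset \pred^\lambda(I)$ where $\pred$ is taken in $\scr N$. I would let $\scr F$ denote the $\sigma$-algebra $\salg(\scr N)$ and consider, for a function $w=\sum_{I\in\scr G_i}w_I h_I|I|^{-1}$, the function $T_m w=\sum_{I\in\scr G_i}w_I h_{\tau_m(I)}|\tau_m(I)|^{-1}$. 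Since $|I|=|\tau_m(I)|$ and $h_{\tau_m(I)}$ is supported in $I\cup\tau_m(I)$ which is itself contained in the fixed ancestor $\pred^\lambda(I)$, the function $h_{\tau_m(I)}$ is a sum of at most $2^\lambda\lesssim|m|$ Haar functions $h_K$, $K\subset\pred^\lambda(I)$, $K\in\scr N$, with coefficients bounded by $1$ in modulus --- but crucially, for the martingale-transform representation one does not expand further: one observes that $h_{\tau_m(I)}$ has constant value $\pm1$ on $I$ and on $\tau_m(I)$, and mean zero, so $h_{\tau_m(I)}|\tau_m(I)|^{-1} = \pm\bigl(\charfun_{\tau_m^-(I)} - \charfun_{\tau_m^+(I)}\bigr)|I|^{-1}$ where $\tau_m^\pm(I)$ are the two halves of $\tau_m(I)$. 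The cleanest route is: the collection $\{I\cup\tau_m(I):I\in\scr G_i\}$ together with all of $\scr N$ generates a filtration in which each $h_{\tau_m(I)}$ is a martingale difference (up to a bounded number of steps), so $T_m$ restricted to $P_i^{(\delta)}$-functions is a composition of a bounded change of Haar basis (Kahane's contraction principle, since passing between $h_I$-normalization on overlapping-but-nested intervals costs a factor $\lesssim1$) followed by a martingale transform with $\{-1,+1\}$-valued multipliers, whose norm is $\le\umdconst_p(X)$ by~\eqref{eqn:umd--property}.

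**Executing the estimate.** Concretely I would argue as follows. Fix $i,\delta$ and set $w=S^\delta P_i^{(\delta)}u$. Because $\{I,\tau_m(I),I\cup\tau_m(I):I\in\scr G_i\}$ is nested, define a $\sigma$-algebra $\scr F_i$ for which the sets $\tau_m^\pm(I)$, $I\in\scr G_i$, are atoms-or-unions appropriately, so that $T_m w = \sum_{I\in\scr G_i}\varepsilon_I\,d_I$ for a martingale difference sequence $\{d_I\}$ (the $d_I$ being $w_I|\tau_m(I)|^{-1}(\charfun_{\tau_m^-(I)}-\charfun_{\tau_m^+(I)})$, mean-zero over $I\cup\tau_m(I)$ because $I\cup\tau_m(I)$ is an atom of the coarser algebra by nestedness) and $\varepsilon_I=\pm1$. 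Then $\|T_m w\|_{L_X^p}\le\umdconst_p(X)\,\|\sum_I d_I\|_{L_X^p}$, and $\sum_I d_I$ is again a linear combination of Haar functions on $\scr N$ with the same $w_I$'s rearranged; comparing $\|\sum_I d_I\|_{L_X^p}$ with $\|w\|_{L_X^p}$ via the $\umd$-property (unconditionality of the Haar system) and Kahane's contraction principle gives $\|\sum_I d_I\|_{L_X^p}\lesssim\|w\|_{L_X^p}$. Finally $\|w\|_{L_X^p}=\|S^\delta P_i^{(\delta)}u\|_{L_X^p}\lesssim\|P_i^{(\delta)}u\|_{L_X^p}$ since $S^\delta$ is an isomorphism with constant depending only on $\umdconst_p(X)$, and undoing the commutation $S^\delta\circ T_m = T_m\circ S^\delta$ together with invertibility of $S^\delta$ converts the bound on $S^\delta T_m P_i^{(\delta)}u$ into~\eqref{eqn:shift_estimate-restricted}. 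The dependence on $\lambda\sim\log_2|m|$ disappears here precisely because we do \emph{not} sum $K(m)$ pieces: within a single $\scr B_i^{(\delta)}$ the nestedness from Lemma~\ref{lem:shift-lemma} makes $T_m$ a \emph{single} martingale transform, so the constant is $\umdconst_p(X)$-dependent only.

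**Main obstacle.** The delicate point is verifying that $T_m$ really is a martingale transform on $\scr G_i$-supported functions and not merely a bounded operator for some softer reason: one must check that $h_{\tau_m(I)}$, when expressed in the martingale built from the nested collection~\eqref{eqn:shift-lemma-nested_collection}, is genuinely a (single, or boundedly-many) martingale difference with sign-multiplier --- this uses in an essential way both that $\tau_m(I)$ is dyadic in $\scr N$-scale (so $h_{\tau_m(I)}$ is a Haar-type difference of the two dyadic halves of $\tau_m(I)$) and that $I\cup\tau_m(I)$ sits inside a common ancestor that is an atom at the appropriate level, guaranteeing mean-zero. Handling the normalization factors $|I|^{-1}$ versus $|I\cup\tau_m(I)|^{-1}$ cleanly (they differ by a factor in $\{1,2,3\}$, absorbed by Kahane's contraction principle) and making the $\delta=0$ versus $\delta=1$ bookkeeping uniform are the remaining technicalities, but the crux is the martingale-difference identification.
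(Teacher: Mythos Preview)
Your overall architecture is right: reduce via the commutation $T_m\circ S^\delta=S^\delta\circ T_m$ and the isomorphism property of $S^\delta$, then work on $\scr G_i=\sigma^\delta(\scr B_i^{(\delta)})$ where the nested collection from Lemma~\ref{lem:shift-lemma} is available. The gap is in the step you label ``comparing $\|\sum_I d_I\|_{L_X^p}$ with $\|w\|_{L_X^p}$ via the $\umd$-property (unconditionality of the Haar system) and Kahane's contraction principle''. With your choice $d_I=w_I\,h_{\tau_m(I)}\,|I|^{-1}$ this comparison is precisely the inequality $\|T_m w\|\lesssim\|w\|$ you are trying to prove; unconditionality lets you change signs of coefficients but not reassign which Haar function carries which coefficient, and Kahane buys nothing extra here. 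The sequences $\{h_I:I\in\scr G_i\}$ and $\{h_{\tau_m(I)}:I\in\scr G_i\}$ are each martingale differences in the grid $\sigma^\delta(\dint)$, but they are not related to one another by a $\pm1$-multiplier in a \emph{common} filtration, so the $\umd$-inequality~\eqref{eqn:umd--property} does not apply directly.

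The paper closes this gap by a different mechanism: it passes from $(T_m S^\delta u)_j$ to the indicator version $\indop(T_m S^\delta u)_j$ via~\eqref{eqn:indop-isometry}, observes the pointwise domination $\indop(T_m S^\delta u)_j\le 2\,\cond\big(\indop(S^\delta u)_j\,\big|\,\scr F_j^{(\delta)}\big)$ with $\scr F_j^{(\delta)}$ generated by the atoms $I\cup\tau_m(I)$, and then invokes Bourgain's vector-valued Stein inequality~\eqref{eqn:steins_martingale_inequality} to remove the conditional expectations. This is the missing analytic ingredient; nestedness alone does not yield a martingale-transform identity of the form you describe. If you want to avoid Stein--Bourgain and make $T_m$ a genuine $\pm1$-transform, you must do what Remark~\ref{rem:figiel-stein} does: strengthen the combinatorics so that the \emph{halves} $I_0,I_1,\tau_m(I)_0,\tau_m(I)_1$ together with $I\cup\tau_m(I)$ are nested, and then set $d_{I,1}=\tfrac12(h_I+h_{\tau_m(I)})$, $d_{I,2}=\tfrac12(h_I-h_{\tau_m(I)})$, so that $h_I=d_{I,1}+d_{I,2}$ and $h_{\tau_m(I)}=d_{I,1}-d_{I,2}$ are related by a sign flip in one and the same martingale. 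Your proposal gestures toward this but neither carries out the $d_{I,1},d_{I,2}$ construction nor verifies the stronger nestedness it requires.
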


\begin{proof}
  Note that due to symmetry once we
  established~\eqref{eqn:shift_estimate-restricted} for $m \geq 1$,
  the theorem is proved.

  Recalling the properties of the partition
  $\scr B_i^{(\delta)}$, $0 \leq i \leq K(m)$, $\delta \in \{0, 1\}$
  of $\dint$, see Lemma~\vref{lem:shift-lemma}, and we know that the
  collection 
  \begin{equation}\label{proof:eqn:shift-lemma-nested_collection}
    \big\{I,\, \tau_m(I),\, I \union \tau_m(I)\, :\,
    I \in \sigma^{\delta}(\scr B_i^{(\delta)}) \big\}
  \end{equation}
  is nested, for all $0 \leq i \leq K(m)$ and $\delta \in \{0, 1\}$.
  Throughout this proof let $m \in \bb Z$, $0 \leq i \leq K(m)$,
  $\delta \in \{0, 1\}$ and $u \in P_i^{(\delta)}(L_X^p)$ be fixed.
  According to~\eqref{eqn:projection-1} we may assume that $u$ has the
  representation
  \begin{equation*}
    u = \sum_{I \in \scr B_i^{(\delta)}} u_I\, h_I\, |I|^{-1}.
  \end{equation*}

  For every $J \in \sigma^\delta(\dint)$ let
  \begin{equation}\label{proof:eqn:shift_atoms-1}
    A^{(\delta)}(J)
    = J \union \tau_m(J),
  \end{equation}
  and for all $j \in \bb Z$ define the collection
  \begin{equation}\label{proof:eqn:shift_atoms-collectoion-1}
    \scr A_j^{(\delta)} =
    \big\{ A^{(\delta)}(J)\, :\, J \in \sigma^\delta(\dint_j) \big\}.
  \end{equation}
  Then specify the filtration $\{\scr F_j^{(\delta)}\}_j$ by
  \begin{equation}\label{proof:eqn:filtration}
    \scr F_j^{(\delta)}
    = \salg \Big( \Union_{i \leq j} \scr A_i^{(\delta)} \Big),
  \end{equation}
  and observe that due
  to~\eqref{proof:eqn:shift-lemma-nested_collection} every
  $A^{(\delta)}(J)$, $J \in \sigma^\delta(\dint_j)$ is an atom for
  $\scr F_j^{(\delta)}$.
  The one--third--shift operator is given by
  \begin{equation}\label{proof:eqn:one-third-shift-operator}
    S^{\delta} u
    = \sum_{I \in \scr B_i^{(\delta)}}
      u_I\, h_{\sigma^{\delta}(I)}\, |I|^{-1}
    = \sum_{ J \in \sigma^\delta (\scr B_i^{(\delta)}) }
      u_{\sigma^{-\delta}(J)}\, h_J\, |J|^{-1},
  \end{equation}
  see~\eqref{eqn:one-third-shift-operator} for details.
  We recall the notation
  \begin{equation*}
    (u)_j = \sum_{|I|=2^{-j}} u_I\, h_I\, |I|^{-1}
    \quad \text{and} \quad
    \indop (u)_j = \sum_{|I|=2^{-j}} u_I\, \charfun_I\, |I|^{-1},
  \end{equation*}
  and note that
  \begin{equation*}
    \big\| T_m\, S^\delta u \big\|_{L_X^p}
    \approx \int_0^1 \big\|
      \sum_{j \in \bb Z} r_j(t)\, \indop\big(T_m\, S^\delta u\big)_j
    \big\|_{L_X^p}
    \, \mathrm dt,
  \end{equation*}
  see~\eqref{eqn:restriction}, \eqref{eqn:indop}
  and~\eqref{eqn:indop-isometry}.
  Obviously,
  $\indop\big(T_m\, S^\delta u\big)_j
  \leq 2\cdot \cond \big(
  \indop(S^\delta u)_j | \scr F_j^{(\delta)} \big)$,
  hence Kahane's contraction principle and Bourgain's version of
  Stein's martingale inequality yield
  \begin{align*}
    \int_0^1 \big\|
      \sum_{j \in \bb Z} r_j(t)\, \indop\big(T_m\, S^\delta u\big)_j
    \big\|_{L_X^p}
    \, \mathrm dt
    & \leq \int_0^1 \big\|
      \sum_{j \in \bb Z} r_j(t)\,
      2\cdot \cond \big(
      \indop(S^\delta u)_j | \scr F_j^{(\delta)} \big)
      \big\|_{L_X^p}
      \, \mathrm dt\\
    & \lesssim \int_0^1 \big\|
      \sum_{j \in \bb Z} r_j(t)\, \indop(S^\delta u)_j \big\|_{L_X^p}
      \, \mathrm dt\\
    & \approx \big\| S^\delta u \big\|_{L_X^p}.
  \end{align*}
  Combining the latter two estimates with
  Theorem~\vref{thm:one-third-trick-isomorphism} proves
  \begin{equation}\label{proof:eqn:shift_estimate-1}
    \big\| T_m\, S^\delta u \big\|_{L_X^p}
    \lesssim \big\| u \big\|_{L_X^p}.
  \end{equation}
  According to~\eqref{eqn:one-third-shift_and_tm_commute}
  the shift operator $T_m$ and the one--third--shift operator $S$
  commute, so we have the identity
  \begin{equation*}
    T_m u
    = \big( S^{-\delta} \circ T_m \circ S^\delta \big)(u),
  \end{equation*}
  and we obtain by an application of
  Theorem~\vref{thm:one-third-trick-isomorphism}
  \begin{equation}\label{proof:eqn:shift_estimate-2}
    \big\| T_m u \big\|_{L_X^p}
    \lesssim \big\|
      \big( T_m \circ S^\delta \big)(u)
    \big\|_{L_X^p}.
  \end{equation}
  We conclude the proof by joining~\eqref{proof:eqn:shift_estimate-2}
  and~\eqref{proof:eqn:shift_estimate-1}.
\end{proof}

\begin{rem}\label{rem:figiel-stein}
  By slightly adjusting the construction of $\scr B_i^{(\delta)}$ we
  could replace Bourgain's version of Stein's martingale inequality by
  the martingale transforms
  in~\cite[Proposition~2, Step~0]{figiel:1988} in order to
  obtain~\eqref{proof:eqn:shift_estimate-1}.
  To this end we will basically have to replace
  $\lambda$ by $\lambda+1$ and redefine $\scr C_0$ and $\scr C_1$ as
  follows
  \begin{equation*}
    \begin{aligned}
      \scr C_0(I, \scr A_i)
      & = \big\{
        J \in \scr A_i\, :\,
        |J| = 2^{-\lambda}\, |I|,\
        J \subset I_0 \text{ and } \tau_m(J) \subset I_0
      \big\}\\
      & \qquad \union
      \big\{
        J \in \scr A_i\, :\,
        |J| = 2^{-\lambda}\, |I|,\
        J \subset I_1 \text{ and } \tau_m(J) \subset I_1
      \big\},\\
      \scr C_1(I, \scr A_i)
      & = \big\{
        J \in \scr A_i\, :\,
        |J| = 2^{-\lambda}\, |I|,\
        J \subset I_0 \text{ and } \tau_m(J) \isect I_0 = \emptyset
      \big\}\\
      & \qquad \union \big\{
        J \in \scr A_i\, :\,
        |J| = 2^{-\lambda}\, |I|,\
        J \subset I_1 \text{ and } \tau_m(J) \isect I_1 = \emptyset
      \big\},
    \end{aligned}
  \end{equation*}
  confer~\eqref{eqn:splitting_sets} and~\eqref{eqn:lambda_support-1}.
  This results in the collection
  \begin{equation}\label{eqn:shift-lemma-nested_collection*1}
    \big\{ J_0,\, \tau_m(J)_0,\, J_1,\, \tau_m(J)_1,\,
    J \union \tau_m(J)\, :\,
    J \in \sigma^{\delta}(\scr B_i^{(\delta)}) \big\}
  \end{equation}
  being nested for all $0 \leq i \leq K(m)$ and
  $\delta \in \{0, 1\}$.
  With this modifications let us define
  \begin{equation*}
    d_{J,1}^{(\delta)}
    = \frac{1}{2}\big( h_J + h_{\tau_m(J)} \big)
    \quad \text{and}\quad
    d_{J,2}^{(\delta)}
    = \frac{1}{2}\big( h_J - h_{\tau_m(J)} \big),
  \end{equation*}
  for all $J \in \sigma^\delta\big( \scr B_i^{(\delta)} \big)$.
  Since~\eqref{eqn:shift-lemma-nested_collection*1} is nested,
  $\big\{ d_{J,1}^{(\delta)}, d_{J,2}^{(\delta)}\, :\,
  J \in \sigma\big( \scr B_i^{(\delta)} \big) \big\}$
  forms a martingale difference sequence.
  Observe $h_J = d_{J,1}^{(\delta)} + d_{J,2}^{(\delta)}$
  and $h_{\tau_m(J)} = d_{J,1}^{(\delta)} - d_{J,2}^{(\delta)}$,
  hence we may swap $h_J$ and $h_{\tau_m(J)}$ without using Bourgain's
  version of Stein's martingale inequality.
\end{rem}
\bigskip
\section{A Martingale Decomposition for $U_m$}\label{s:shift-um}

In this section we will decompose the Haar system
into $24 + 6\cdot \log_2(|m|)$ subcollections, so that on each fixed
subcollection the rearrangement
operator $U_m$ is either a martingale transform operator itself or the
sum of two martingale transform operators.
To be more precise, the total amount of subcollections on which we
will estimate parts of $U_m$ that act as martingale transform
operators will be $40 + 10\cdot \log_2(|m|)$.
This gives immediately the estimate~\cite{figiel:1988}
\begin{equation*}
  \|U_m\, :\, L_X^p \rightarrow L_X^p\|
  \leq C\cdot \big( \log_2 (2 + |m|) \big)^\beta,
\end{equation*}
for some $0 < \beta < 1$.

The operator $T_m$ is easier to analyze than $U_m$.
This is mainly due to the observation that
$\{ T_m h_I \}_{I \in \scr A}$ is a martingale difference sequence for
any choice of $\scr A \subset \dint$, whereas whether
$\{ U_m h_I \}_{I \in \scr B}$ forms a martingale difference sequence
strongly depends on the choice of $\scr B \subset \dint$.
Making use of the one--third--shift operators introduced in
Section~\ref{s:one-third-trick}, we will decompose the operator $U_m$
into the five parts
\begin{equation*}
  U_m = U_m\circ P^{(0)} + \sum_{\varepsilon\in \{0,1\}}
    \big(A_m^{(\varepsilon)} + B_m^{(\varepsilon)}\big)\circ P^{(1,\varepsilon)}
\end{equation*}
each of which behaves like $T_m$.
Some parts of this decomposition will be well localized, whereas others
are widespread, see Figures~\ref{fig:support_functions},
\ref{fig:mds_decomposition0} and~\ref{fig:mds_decomposition1}.

\bigskip

In~\eqref{eqn:shift-taum} we defined the shift map $\tau_m$ for every
$m \in \bb Z$ by
\begin{equation*}
  \tau_m(I) = I + m\, |I|,
\end{equation*}
for all $I \in \dint \union \sigma(\dint)$.
Now we introduce the shift operator $U_m$ by setting
\begin{equation}\label{eqn:shift-um}
  U_m h_I = \charfun_{\tau_m(I)} - \charfun_I,
\end{equation}
for all $I \in \dint \union \sigma(\dint)$.
Essentially the same method we used to bound $T_m$ for functions
supported on the collections $\scr B_i^{(0)}$, $0 \leq i \leq K(m)$
qualifies for estimating $U_m$.
This is primarily due to the fact that
$\big\{ U_m h_I\, :\, I \in \scr B_i^{(0)} \big\}$
forms a martingale difference sequence, which is ensured by
Lemma~\ref{lem:shift-lemma}.
The main obstacle is to estimate $U_m$ on $\scr B_i^{(1)}$, since
$\big\{ U_m h_I\, :\, I \in \scr B_i^{(1)} \big\}$ is \emph{not} a
martingale difference sequence.
The remedy to this problem is the martingale difference sequence
decomposition of $U_m$ into
\begin{equation*}
  U_m h_I = a_I^{(\varepsilon)}
    + b_I^{(\varepsilon)} - b_{\tau_m(I)}^{(\varepsilon)},
  \qquad I \in \scr B_i^{(1,\varepsilon)}
\end{equation*}
where
\begin{align*}
  \scr B_i^{(1,0)}
  &= \big\{I \in \scr B_i^{(1)}\, :\,
    \inf \tau_m(I) \neq \inf \pred^\lambda(\tau_m(I))
  \big\},\\
  \scr B_i^{(1,1)}
  &= \big\{I \in \scr B_i^{(1)}\, :\,
    \inf \tau_m(I) = \inf \pred^\lambda(\tau_m(I))
  \big\}.
\end{align*}
Recall that given $\delta \in \{0,1\}$ and an interval
$I \in \sigma^\delta(\dint)$, the interval $\pred(I)$ is the unique
$J \in \sigma^\delta(\dint)$ such that $J \supset I$, and
$\pred^\lambda = \pred\circ\cdots\circ\pred$.
The collections
$\big\{ a_I^{(\varepsilon)}\, :\, I \in \scr B_i^{(1,\varepsilon)} \big\}$
and
$\big\{ b_I^{(\varepsilon)}, b_{\tau_m(I)}^{(\varepsilon)}\, :\, I \in \scr B_i^{(1,\varepsilon)} \big\}$
are martingale difference sequences, each, see
Theorem~\ref{thm:mds_decomposition}.
This is what enables us to treat $U_m$ like $T_m$, which is elaborated
in Theorem~\ref{thm:shift-restricted-um}.

\bigskip

First, we define
$\alpha_0, \alpha_1\, :\, \dint \longrightarrow \sigma(\dint)$,
\begin{align}
  \alpha_0(I) &= \sigma_0(I),
  \label{eqn:associate-left}\\
  \alpha_1(I) & = \sigma_1(I),
  \label{eqn:associate-right}
\end{align}
where $\sigma_0$, and $\sigma_1$ are given
by~\eqref{eqn:unilateral-map-0} and~\eqref{eqn:unilateral-map-1} in
Subsection~\ref{ss:unilateral}.
Secondly, define the maps $\beta_0, \beta_1$ and $\beta$ by
\begin{align}
  \beta_0 (I) &= \alpha_0(I)\setminus I,
  \label{eqn:beta-left}\\
  \beta_1(I) &= \alpha_1(I)\isect I,
  \label{eqn:beta-right}\\
  \beta(I) &= \beta_0(I) \union \beta_1(I).
  \label{eqn:beta-left+right}
\end{align}
Finally, $\gamma_0, \gamma_1$ and $\gamma$ are given by
\begin{align}
  \gamma_0(I) &= \tau_{-1}(I),
  \label{eqn:gamma-left}\\
  \gamma_1(I) &= I,
  \label{eqn:gamma-right}\\
  \gamma(I) &= \gamma_0(I) \union \gamma_1(I).
\label{eqn:gamma-left+right}
\end{align}
The functions $\alpha_0$, $\alpha_1$, $\beta_0$ and $\beta_1$ are
visualized in Figure~\vref{fig:support_functions}.
\begin{figure}[bt]
  \begin{center}
    \begin{pspicture}(0,-.5)(10,4)
      \newcommand{\interval}[1]{%
        \psline[linewidth=1.2pt,tbarsize=0.15]{|*-|*}(#1,0)}

      \multirput(5.5,0){2}{%
        \interval{1.5}
        \rput(2.25,0){\interval{1.5}}
          \rput(0,1){%
            \interval{2.25}
            \rput(2.25,0){\interval{2.25}}}
          \rput(1.5,2){\interval{2.25}}
        \psline[linestyle=dotted](0,0)(0,3)
        \psline[linestyle=dotted](1.5,0)(1.5,3)
        \psline[linestyle=dotted](2.25,0)(2.25,3)
        \psline[linestyle=dotted](3.75,0)(3.75,3)
        \psline[linestyle=dotted](4.5,1)(4.5,3)

        \pcline[linewidth=.6pt,offset=3,arrowsize=.1,tbarsize=0.15]
        {|<*->|*}(0,0)(1.5,0)
        \naput{$\frac{2}{3}\, |I|$}
        \pcline[linewidth=.6pt,offset=3,arrowsize=.1,tbarsize=0.15]
        {|<*->|*}(1.5,0)(2.25,0)
        \naput{$\frac{1}{3}\, |I|$}
        \pcline[linewidth=.6pt,offset=3,arrowsize=.1,tbarsize=0.15]
        {|<*->|*}(2.25,0)(3.75,0)
        \naput{$\frac{2}{3}\, |I|$}
        \pcline[linewidth=.6pt,offset=3,arrowsize=.1,tbarsize=0.15]
        {|<*->|*}(3.75,0)(4.5,0)
        \naput{$\frac{1}{3}\, |I|$}
      }
      \uput[90](.75,0){$\beta_0(I)$}
      \uput[90](3,0){$\beta_1(I)$}
      \rput(5.5,0){
        \uput[90](.75,0){$\beta_0(\tau_m(I))$}
        \uput[90](3,0){$\beta_1(\tau_m(I))$}}
      \uput[90](1.125,1){$\alpha_0(I)$}
      \uput[90](3.375,1){$\alpha_1(I)$}
      \rput(5.5,0){%
        \uput[90](1.125,1){$\alpha_0(\tau_m(I))$}
        \uput[90](3.375,1){$\alpha_1(\tau_m(I))$}}
      \uput[90](2.625,2){$I$}
      \rput(5.5,0){\uput[90](2.625,2){$\tau_m(I)$}}
    \end{pspicture}
  \end{center}
  \caption{The support functions
    $\alpha_0$, $\alpha_1$, $\beta_0$, $\beta_1$
    for $I$ and $\tau_m(I)$.}
  \label{fig:support_functions}
\end{figure}

With $m \in  \bb Z$, $m \geq 1$ fixed, we introduce the functions
\begin{align}
  a_I^{(0)} &= \charfun_{\alpha_0(\tau_m(I))} - \charfun_{\alpha_0(I)},
  & I &\in \dint,
  \label{eqn:mds_decomposition-a0}\\
  b_I^{(0)} &= \charfun_{\beta_0(I)} - \charfun_{\beta_1(I)},
  & I &\in \dint,
  \label{eqn:mds_decomposition-b0}
\end{align}
and
\begin{align}
  a_I^{(1)} &= \charfun_{\alpha_1(\tau_m(I))} - \charfun_{\alpha_1(I)},
  & I &\in \dint,
  \label{eqn:mds_decomposition-a1}\\
  b_I^{(1)}
  &= \charfun_{I\setminus\beta_1(I)}-\charfun_{I\setminus\beta_0(I)},
  & I &\in \dint.
  \label{eqn:mds_decomposition-b1}
\end{align}
see Figures~\ref{fig:mds_decomposition0}
and~\ref{fig:mds_decomposition1}.
\begin{figure}[bt]
  \begin{center}
    \begin{pspicture}(0,-1)(10,7)
      \newcommand{\interval}[1]{%
        \psline[linewidth=1.2pt,tbarsize=0.15]{|*-|*}(#1,0)}

      \multirput(5.5,0){2}{%
        \interval{2.25}
        \rput(2.25,0){\interval{2.25}}
        \rput(0,.5){%
          \rput(0,2){%
            \interval{2.25}
            \rput(2.25,0){\interval{2.25}}}
          \rput(1.5,4){\interval{2.25}}}
        \psline[linestyle=dotted](0,0)(0,6)
        \psline[linestyle=dotted](1.5,0)(1.5,6)
        \psline[linestyle=dotted](2.25,0)(2.25,6)
        \psline[linestyle=dotted](3.75,0)(3.75,6)
        \psline[linestyle=dotted](4.5,0)(4.5,6)
      
        \pcline[linewidth=.6pt,offset=6,arrowsize=.1,tbarsize=0.15]
        {|<*->|*}(0,0)(1.5,0)
        \naput{$\frac{2}{3}\, |I|$}
        \pcline[linewidth=.6pt,offset=6,arrowsize=.1,tbarsize=0.15]
        {|<*->|*}(1.5,0)(2.25,0)
        \naput{$\frac{1}{3}\, |I|$}
        \pcline[linewidth=.6pt,offset=6,arrowsize=.1,tbarsize=0.15]
        {|<*->|*}(2.25,0)(3.75,0)
        \naput{$\frac{2}{3}\, |I|$}
        \pcline[linewidth=.6pt,offset=6,arrowsize=.1,tbarsize=0.15]
        {|<*->|*}(3.75,0)(4.5,0)
        \naput{$\frac{1}{3}\, |I|$}}
      \psline(0,.5)(1.5,.5)
      \uput[90](.75,.5){$+1$}
      \uput[-90](.75,0){$\beta_0(I)$}
      \psline(2.25,-.5)(3.75,-.5)
      \uput[-90](3,-.5){$-1$}
      \uput[90](3,0){$\beta_1(I)$}
      \psline[linestyle=dotted](2.25,0)(2.25,-0.5)
      \psline[linestyle=dotted](3.75,0)(3.75,-0.5)
      \uput*[180](5,1){$b_I^{(0)}$\ }
      \rput(5.5,0){
        \psline(0,-.5)(1.5,-.5)
        \uput[-90](.75,-.5){$-1$}
        \uput[90](.75,0){$\beta_0(\tau_m(I))$}
        \psline(2.25,.5)(3.75,.5)
        \uput[90](3,.5){$+1$}
        \uput[-90](3,0){$\beta_1(\tau_m(I))$}
        \psline[linestyle=dotted](0,0)(0,-0.5)
        \psline[linestyle=dotted](1.5,0)(1.5,-0.5)
        \uput*[0](-.5,1){$-b_{\tau_m(I)}^{(0)}$}}
      \rput(0,.5){%
      \psline(0,1.5)(2.25,1.5)
      \uput[-90](1.125,1.5){$-1$}
      \uput[90](1.125,2){$\alpha_0(I)$}
      \uput[90](3.375,2){$\alpha_1(I)$}
      \psline(5.5,2.5)(7.75,2.5)
      \rput(5.5,0){%
        \uput[90](1.125,2.5){$+1$}
        \uput[-90](1.125,2){$\alpha_0(\tau_m(I))$}
        \uput[-90](3.375,2){$\alpha_1(\tau_m(I))$}}
      \uput{0}[90](5,2){$a_I^{(0)}$}
      \psline(1.5,3.5)(3.75,3.5)
      \psline(7,4.5)(9.25,4.5)
      \uput[90](2.625,4){$I$}
      \rput(5.5,0){\uput[270](2.625,4){$\tau_m(I)$}}
      \uput[270](2.625,3.5){$-1$}
      \rput(5.5,0){\uput[90](2.625,4.5){$+1$}}
      \uput{0}[90](5,4){$U_m h_I$}}
    \end{pspicture}
  \end{center}
  \caption{Martingale decomposition of $U_m$ to the left.}
  \label{fig:mds_decomposition0}
\end{figure}
\begin{figure}[bt]
  \begin{center}
    \begin{pspicture}(0,-1)(10,7)
      \newcommand{\interval}[1]{%
        \psline[linewidth=1.2pt,tbarsize=0.15]{|*-|*}(#1,0)}

      \multirput(5.5,0){2}{%
        \interval{2.25}
        \rput(2.25,0){\interval{2.25}}
        \rput(0,.5){%
          \rput(0,2){%
            \interval{2.25}
            \rput(2.25,0){\interval{2.25}}}
          \rput(1.5,4){\interval{2.25}}}
        \psline[linestyle=dotted](0,0)(0,6)
        \psline[linestyle=dotted](1.5,0)(1.5,6)
        \psline[linestyle=dotted](2.25,0)(2.25,6)
        \psline[linestyle=dotted](3.75,0)(3.75,6)
        \psline[linestyle=dotted](4.5,0)(4.5,6)
      
        \pcline[linewidth=.6pt,offset=6,arrowsize=.1,tbarsize=0.15]
        {|<*->|*}(0,0)(1.5,0)
        \naput{$\frac{2}{3}\, |I|$}
        \pcline[linewidth=.6pt,offset=6,arrowsize=.1,tbarsize=0.15]
        {|<*->|*}(1.5,0)(2.25,0)
        \naput{$\frac{1}{3}\, |I|$}
        \pcline[linewidth=.6pt,offset=6,arrowsize=.1,tbarsize=0.15]
        {|<*->|*}(2.25,0)(3.75,0)
        \naput{$\frac{2}{3}\, |I|$}
        \pcline[linewidth=.6pt,offset=6,arrowsize=.1,tbarsize=0.15]
        {|<*->|*}(3.75,0)(4.5,0)
        \naput{$\frac{1}{3}\, |I|$}}
      \psline(1.5,-.5)(2.25,-.5)
      \uput[-90](1.875,-.5){$-1$}
      \uput[-90](.75,0){$\beta_0(I)$}
      \psline(3.75,.5)(4.5,.5)
      \uput[90](4.125,.5){$+1$}
      \uput[90](3,0){$\beta_1(I)$}
      \psline[linestyle=dotted](1.5,0)(1.5,-0.5)
      \psline[linestyle=dotted](2.25,0)(2.25,-0.5)
      \uput*[180](4.5,-.5){$b_I^{(1)}$}
      \rput(5.5,0){
        \psline(1.5,.5)(2.25,.5)
        \uput[90](1.875,.5){$+1$}
        \uput[-90](.75,0){$\beta_0(\tau_m(I))$}
        \psline(3.75,-.5)(4.5,-.5)
        \uput[-90](4.125,-.5){$-1$}
        \uput[90](3,0){$\beta_1(\tau_m(I))$}
        \psline[linestyle=dotted](3.75,0)(3.75,-0.5)
        \psline[linestyle=dotted](4.5,0)(4.5,-0.5)
        \uput*[0](0,.5){$-b_{\tau_m(I)}^{(1)}$}
      }
      \rput(0,.5){%
      \psline(2.25,1.5)(4.5,1.5)
      \uput[-90](3.375,1.5){$-1$}
      \uput[90](1.125,2){$\alpha_0(I)$}
      \uput[90](3.375,2){$\alpha_1(I)$}
      \psline(7.75,2.5)(10,2.5)
      \rput(5.5,0){%
        \uput[90](3.375,2.5){$+1$}
        \uput[-90](1.125,2){$\alpha_0(\tau_m(I))$}
        \uput[-90](3.375,2){$\alpha_1(\tau_m(I))$}}
      \uput{0}[90](5,2){$a_I^{(1)}$}
      \psline(1.5,3.5)(3.75,3.5)
      \psline(7,4.5)(9.25,4.5)
      \uput[90](2.625,4){$I$}
      \rput(5.5,0){\uput[270](2.625,4){$\tau_m(I)$}}
      \uput[270](2.625,3.5){$-1$}
      \rput(5.5,0){\uput[90](2.625,4.5){$+1$}}
      \uput{0}[90](5,4){$U_m h_I$}}
    \end{pspicture}
  \end{center}
  \caption{Martingale decomposition of $U_m$ to the right.}
  \label{fig:mds_decomposition1}
\end{figure}
We define the operators $A_m^{(\varepsilon)}$, $B^{(\varepsilon)}$ and
$B_m^{(\varepsilon)}$ as the linear extension of
\begin{align}
  A_m^{(\varepsilon)}\, h_I
  &= a_I^{(\varepsilon)},
  & I &\in \dint,
  \label{eqn:mds_decomposition-Am}\\
  B^{(\varepsilon)}\, h_I
  &= b_I^{(\varepsilon)},
  & I &\in \dint,
  \label{eqn:mds_decomposition-B}\\
  B_m^{(\varepsilon)}\, h_I
  &= b_I^{(\varepsilon)} - b_{\tau_m(I)}^{(\varepsilon)},
  & I &\in \dint,
  \label{eqn:mds_decomposition-Bm}
\end{align}
for $\varepsilon \in \{0,1\}$.
Note the identities
\begin{equation}\label{eqn:mds_decomposition-1}
  U_m = A_m^{(\varepsilon)} + B_m^{(\varepsilon)}
  = A_m^{(\varepsilon)} + B^{(\varepsilon)} - B^{(\varepsilon)}\circ T_m,
\end{equation}
hold true for $\varepsilon \in \{0,1\}$, see
\eqref{eqn:mds_decomposition-a0}, \eqref{eqn:mds_decomposition-b0},
\eqref{eqn:mds_decomposition-a1}, \eqref{eqn:mds_decomposition-b1} and
Figures~\ref{fig:mds_decomposition0} and~\ref{fig:mds_decomposition1}.

Now we split the collections $\scr B_i^{(1)}$ into
\begin{equation}\label{eqn:split}
  \scr B_i^{(1)} = \scr B_i^{(1,0)} \union   \scr B_i^{(1,1)},
\end{equation}
where
\begin{align}
  \scr B_i^{(1,0)}
  &= \big\{I \in \scr B_i^{(1)}\, :\,
    \inf \tau_m(I) \neq \inf \pred^\lambda(\tau_m(I))
  \big\},
  \label{eqn:split-0}\\
  \scr B_i^{(1,1)}
  &= \big\{I \in \scr B_i^{(1)}\, :\,
    \inf \tau_m(I) = \inf \pred^\lambda(\tau_m(I))
  \big\},
  \label{eqn:split-1}
\end{align}
for all $0 \leq i \leq K(m)$.
The projections
\begin{equation*}
  P_i^{(0)} u
  = \sum_{I \in \scr B_i^{(0)}}
  \langle u, h_I \rangle\, h_I\, |I|^{-1}
\end{equation*}
were defined in~\eqref{eqn:projection-1}, accordingly we set
\begin{equation}\label{eqn:projection-1*}
  P_i^{(1,\varepsilon)} u
  = \sum_{I \in \scr B_i^{(1,\varepsilon)}}
  \langle u, h_I \rangle\, h_I\, |I|^{-1},
\end{equation}
for all $0 \leq i \leq K(m)$ and $\varepsilon \in \{0,1\}$.
The collection $\scr B_i^{(0)}$ is specified in
Lemma~\ref{lem:shift-lemma}, and $\scr B_i^{(1,\varepsilon)}$ is
defined in~\eqref{eqn:split-0} and~\eqref{eqn:split-1}.
Finally, if we define
\begin{equation}\label{eqn:projection-3}
  \begin{aligned}
    P^{(0)} &= \sum_{i=0}^{K(m)} P_i^{(0)},\\
    P^{(1,\varepsilon)} &= \sum_{i=0}^{K(m)} P_i^{(1,\varepsilon)},
  \end{aligned}
\end{equation}
for all $\varepsilon \in \{0,1\}$, then certainly
\begin{equation}\label{eqn:projection-4}
  u = P^{(0)}\, u + P^{(1,0)}\, u + P^{(1,1)}\, u
\end{equation}
for all $u \in L_X^p$.
Note that $P_i^{(1)} = P_i^{(1,0)} + P_i^{(1,1)}$, where
$P_i^{(1)}$ was defined in~\eqref{eqn:projection-1}.

In the following theorem the operator $U_m$ is decomposed into five
parts, each of which is forming a martingale difference sequence.
\begin{thm}\label{thm:mds_decomposition}
  Let $m \in \bb Z$, $m \geq 1$ and $0 \leq i \leq K(m)$.
  Then the identity
  \begin{equation}\label{eqn:mds_decomposition-um-3}
    U_m\, u = U_m \circ P^{(0)}\, u
      + \sum_{\varepsilon \in \{0,1\}}
      \big( A_m^{(\varepsilon)} +B_m^{(\varepsilon)}\big)
        \circ P^{(1,\varepsilon)}\, u
  \end{equation}
  holds true for all $u \in L_X^p$.
  For every $0 \leq i \leq K(m)$ and $\varepsilon \in \{0,1\}$, each
  of the following collections is a martingale difference sequence:
  \begin{align}
    \big\{ U_m \circ P_i^{(0)}\, h_I\, & :\, I \in \dint \big\},
    \label{eqn:mds-0}\\
    \big\{
      A_m^{(\varepsilon)} \circ P_i^{(1,\varepsilon)}\, h_I\, & :\,
    I \in \dint \big\},
    \label{eqn:mds-a}\\
    \big\{
      B_m^{(\varepsilon)} \circ P_i^{(1,\varepsilon)}\, h_I\, & :\,
    I \in \dint \big\}.
    \label{eqn:mds-b}
  \end{align}
  We have the estimate $K(m) \leq 7 + 2\cdot \log_2(m)$, where $K(m)$
  is defined in Lemma~\vref{lem:shift-lemma}.
\end{thm}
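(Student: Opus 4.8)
The plan is to dispatch the two elementary assertions first. The identity~\eqref{eqn:mds_decomposition-um-3} follows at once from the pointwise partition of unity~\eqref{eqn:projection-4}: writing $U_m u = U_m\circ P^{(0)}u + U_m\circ P^{(1,0)}u + U_m\circ P^{(1,1)}u$ and replacing, on each of the last two summands, $U_m$ by $A_m^{(\varepsilon)} + B_m^{(\varepsilon)}$ according to~\eqref{eqn:mds_decomposition-1} with the matching index $\varepsilon$. The bound $K(m)\le 7 + 2\log_2(m)$ is the corresponding statement of Lemma~\ref{lem:shift-lemma}, here only needed for $m\ge 1$.

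For the three martingale difference assertions I would isolate the following mechanism, already implicit in the proof of Theorem~\ref{thm:shift-restricted-tm}. Let $\{d_I\}_{I\in\scr C}$ be a family of functions, each $d_I$ supported on a measurable set $P(I)$, constant on each of at most two pieces whose union is $P(I)$, with $\int_{P(I)}d_I = 0$, and suppose the collection consisting of all the sets $P(I)$ together with their pieces is nested. Enumerating this nested collection by decreasing Lebesgue measure and letting $\scr G_n$ be the $\sigma$-algebra generated by its first $n$ members yields a filtration in which every $P(I)$ is an atom of some $\scr G_n$ that is then split exactly into the two pieces of $d_I$; since distinct intervals of $\scr C$ lying at a common scale produce disjoint, hence non-interfering, sets $P(I)$, the family $\{d_I\}$ is a martingale difference sequence for $\{\scr G_n\}$. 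It therefore remains, in each of the three cases, to name $P(I)$, check that $d_I$ has vanishing integral over $P(I)$, and verify the nestedness.

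The case~\eqref{eqn:mds-0} is immediate: for $I\in\scr B_i^{(0)}$ we have $U_m\circ P_i^{(0)}h_I = \charfun_{\tau_m(I)} - \charfun_I$, and $=0$ for the remaining $I\in\dint$, so $P(I) = I\cup\tau_m(I)$ with pieces $I$ and $\tau_m(I)$; the integral vanishes because $|\tau_m(I)| = |I|$, and the nestedness of $\{I,\tau_m(I),I\cup\tau_m(I) : I\in\scr B_i^{(0)}\}$ is precisely~\eqref{eqn:shift-lemma-nested_collection} for $\delta = 0$, supplied by Lemma~\ref{lem:shift-lemma}. For~\eqref{eqn:mds-a} one has $d_I = a_I^{(\varepsilon)} = \charfun_{\alpha_\varepsilon(\tau_m(I))} - \charfun_{\alpha_\varepsilon(I)}$ (for $I\in\scr B_i^{(1,\varepsilon)}$, and $=0$ otherwise), so $P(I) = \alpha_\varepsilon(I)\cup\alpha_\varepsilon(\tau_m(I))$ with pieces $\alpha_\varepsilon(I)$ and $\alpha_\varepsilon(\tau_m(I))$, and the integral vanishes since $\alpha_\varepsilon = \sigma_\varepsilon$ preserves measure. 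For~\eqref{eqn:mds-b} one has $d_I = b_I^{(\varepsilon)} - b_{\tau_m(I)}^{(\varepsilon)}$, supported on $\beta(I)\cup\beta(\tau_m(I))$ when $\varepsilon = 0$ and on the analogous pair of interior pieces of $I\cup\tau_m(I)$ when $\varepsilon = 1$, with vanishing integral because $b_I^{(\varepsilon)}$ and $b_{\tau_m(I)}^{(\varepsilon)}$ have equal integrals (each determined by the common scale $|I| = |\tau_m(I)|$).

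In the last two cases the substantive point is the nestedness of the corresponding ``parent plus pieces'' collection: I would deduce it from the nestedness of~\eqref{eqn:shift-lemma-nested_collection} for $\delta = 1$, i.e.\ of $\{\sigma(I),\sigma(\tau_m(I)),\sigma(I)\cup\sigma(\tau_m(I)) : I\in\scr B_i^{(1)}\}$ --- using that $\tau_m$ and $\sigma$ commute on intervals --- together with the elementary geometry of the one-third-shift maps $\alpha_\varepsilon$, $\beta_\varepsilon$ displayed in Figures~\ref{fig:support_functions},~\ref{fig:mds_decomposition0} and~\ref{fig:mds_decomposition1}, and the splitting~\eqref{eqn:split-0}--\eqref{eqn:split-1}. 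The hard part is exactly this verification for the $B_m^{(\varepsilon)}$-pieces: unlike $U_m h_I$ in case~\eqref{eqn:mds-0}, the functions $b_I^{(\varepsilon)}$ protrude beyond $I$ and beyond $\tau_m(I)$, so one must show that these protruding portions never leave the common $\lambda$-th dyadic predecessor --- and this is precisely what the dichotomy~\eqref{eqn:split-0} versus~\eqref{eqn:split-1}, on whether $\inf\tau_m(I)$ equals $\inf\pred^\lambda(\tau_m(I))$, is designed to ensure, in combination with Remark~\ref{rem:strong_shift-lemma}.
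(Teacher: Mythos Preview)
Your treatment of the identity~\eqref{eqn:mds_decomposition-um-3}, the bound on $K(m)$, and the martingale difference property~\eqref{eqn:mds-0} is correct and matches the paper. The general mechanism you isolate (a nested collection of parents-plus-pieces, together with vanishing integrals, yields a martingale difference sequence) is also sound.

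The gap lies in the nestedness verification for~\eqref{eqn:mds-a} and~\eqref{eqn:mds-b}. Your proposed route through Lemma~\ref{lem:shift-lemma} with $\delta=1$ does not deliver what you need: that lemma gives nestedness of $\{\sigma(I),\sigma(\tau_m(I)),\sigma(I)\cup\sigma(\tau_m(I)):I\in\scr B_i^{(1)}\}$, but the pieces you must nest are built from $\alpha_\varepsilon=\sigma_\varepsilon$ (which coincides with $\sigma$ only on alternating levels) and from $\beta_0,\beta_1$ (which are not members of $\sigma(\dint)$ at all---they are thirds of dyadic intervals). There is no automatic passage from $\sigma$-nestedness to $\sigma_\varepsilon$- or $\beta$-nestedness, and the phrase ``elementary geometry of the one-third-shift maps'' does not bridge this. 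Separately, for~\eqref{eqn:mds-b} your framework of ``at most two pieces'' does not apply as stated: $b_I^{(\varepsilon)}-b_{\tau_m(I)}^{(\varepsilon)}$ is constant on \emph{four} pieces $\beta_0(I),\beta_1(I),\beta_0(\tau_m(I)),\beta_1(\tau_m(I))$. The paper handles this by proving the stronger statement (Remark~\ref{rem:mds_decomposition}) that $\{b_I^{(\varepsilon)},b_{\tau_m(I)}^{(\varepsilon)}:I\in\scr B_i^{(1,\varepsilon)}\}$ is itself a martingale difference sequence.

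Your closing diagnosis---that one must show the protruding portions of $b_I^{(\varepsilon)}$ never leave the common $\lambda$-th predecessor---names a necessary but not a sufficient condition: containment in $\pred^\lambda(I)$ does not by itself force $\supp b_J^{(\varepsilon)}\cup\supp b_{\tau_m(J)}^{(\varepsilon)}$ to lie inside a \emph{single constancy piece} of $b_I^{(\varepsilon)}$ when $|J|<|I|$. The paper's proof does not reduce to Lemma~\ref{lem:shift-lemma}; it argues directly. For~\eqref{eqn:mds-a} it shows $\supp a_J^{(0)}\subset\alpha_1(\pred^\lambda(J))$ using $J\subset(\pred^\lambda(J))_{11}$, then a size comparison. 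For~\eqref{eqn:mds-b} it assumes $\beta(J)\cap\beta(I)\neq\emptyset$ and $\beta(J)\not\subset\beta_k(I)$ for either $k$, and runs a four-case analysis on the position of $\gamma(J)$ relative to $I$, $\gamma_0(I)$, and the internal $1/3$-points $\inf I+\tfrac{z}{3}|I|$, deriving a contradiction in each case from the definitions~\eqref{eqn:split-0}--\eqref{eqn:split-1} and the fact that $\lambda\ge 4$. That computation---not a reduction to prior nestedness---is the missing substance.
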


\begin{rem}
  For reasons of symmetry, a similar result holds true for
  $m \leq -1$, when adjusting the construction of $a_I$, $b_I$ and
  $\scr B_i^{(1,\varepsilon)}$, accordingly.
\end{rem}

\begin{proof}
  Let $m \in \bb Z$, $m \geq 1$ and $0 \leq i \leq K(m)$ be fixed
  throughout the rest of this proof.
  Whenever we apply the predecessor map $\pred$ to an interval 
  $I \in \sigma^\delta\big(\dint\big)$, we understand it with respect
  to $\sigma^\delta \big(\dint\big)$, with $\delta \in \{0,1\}$ fixed.

  Observe, identity~\eqref{eqn:mds_decomposition-um-3} follows
  immediately from~\eqref{eqn:projection-4}
  and~\eqref{eqn:mds_decomposition-1}.

  First, note that Lemma~\ref{lem:shift-lemma} implies that
  \begin{equation*}
    \big\{ I,\, \tau_m(I),\, I \union \tau_m(I)\, :\,
    I \in \scr B_i^{(0)} \big\}
  \end{equation*}
  is a nested collection of sets, hence
  \begin{equation*}
    \big\{ U_m\, h_I\, :\, I \in \scr B_i^{(0)} \big\}
  \end{equation*}
  is a martingale difference sequence.

  Secondly, we will show that
  $\big\{ a_I^{(0)}\, :\, I \in \scr B_i^{(1,0)} \big\}$ forms a
  martingale difference sequence.
  Henceforth, we shall abbreviate $\scr B_i^{(1,0)}$ by $\scr B$.
  Now, fix $I, J \in \scr B$, $|J| < |I|$ such that
  $\supp a_J^{(0)} \isect \supp a_I^{(0)} \neq \emptyset$.
  Note that
  $J \subset \big(\pred^{\lambda}(J)\big)_{11}$, for all
  $J \in \scr B$, where $K_{11}$, $K \in \dint$ denotes the
  unique $M \subset K$, $M \in \dint$, $|M| = |K|/4$ such that
  $\sup M = \sup K$.
  From this and the definition of $\scr B$ it is clear that
  $\supp a_J^{(0)} \subset \alpha_1(\pred^\lambda(J))$
  (see also Remark~\ref{rem:strong_shift-lemma}), hence
  \begin{equation*}
    \emptyset
    \neq \alpha_1(\pred^\lambda(J)) \isect \supp a_I^{(0)}
    = \big( \alpha_1(\pred^\lambda(J)) \isect \alpha_0(I) \big)
    \union \big(
      \alpha_1(\pred^\lambda(J)) \isect \alpha_0(\tau_m(I)) \big).
  \end{equation*}
  Since $|J| < |I|$, $I, J \in \scr B$, we know that
  $|\alpha_1(\pred^\lambda(J))| \leq |I|$, thus
  \begin{equation*}
    \text{either}\quad
    \alpha_1(\pred^\lambda(J)) \subset \alpha_0(I)
    \quad \text{or}\quad
    \alpha_1(\pred^\lambda(J)) \subset \alpha_0(\tau_m(I)),
  \end{equation*}
  which finishes the second part of this proof.

  The proof that
  $\big\{ a_I^{(1)}\, :\, I \in \scr B_i^{(1,1)} \big\}$ forms a
  martingale difference sequence is essentially the same, and we omit
  the details.

  Thirdly, we will show that
  $\big\{ b_I^{(0)}, b_{\tau_m(I)}^{(0)}\, :\, I \in \scr B^{(1,0)} \big\}$
  constitutes a martingale difference sequence.
  Again, we shall abbreviate $\scr B_i^{(1,0)}$ by $\scr B$.
  To this end, we assume there exist
  $I, J \in \scr B \union \tau_m(\scr B)$, $|J| < |I|$ such that
  \begin{equation}\label{proof:assumption}\tag{$\cal A$}
    \beta(J) \isect \beta(I) \neq \emptyset
    \qquad \text{and}\qquad
    \beta(J) \isect \beta(I)^c \neq \emptyset.
  \end{equation}
  Since $\beta(J) \subset \gamma(J)$,
  assumption~\eqref{proof:assumption} is covered by the following four
  cases.
  \begin{enumerate}
  \item $\gamma(J) \isect I \neq \emptyset$ and
    $\gamma(J) \isect I^c \neq \emptyset$,
    \label{enu:case-1}
  \item $\gamma(J) \isect \gamma_0(I) \neq \emptyset$ and
    $\gamma(J) \isect \gamma_0(I)^c \neq \emptyset$,
    \label{enu:case-2}
  \item $\gamma(J) \subset I$ and
    $\inf \beta_1(I) \in \gamma(J)$,
    \label{enu:case-3}
  \item $\gamma(J) \subset \gamma_0(I)$ and
    $\inf \beta_0(I) \in \gamma(J)$.
    \label{enu:case-4}
  \end{enumerate}
  If we assume case~\eqref{enu:case-1}, then $\inf J = \inf I$ or
  $\inf J = \sup I$. Anyhow, we have that
  $\inf J = \inf \pred^\lambda(J)$, so we know
  $J \notin \big( \scr B \union \tau_m(\scr B) \big)$,
  contradicting our assumption.
  Case~\eqref{enu:case-2} is analogous to case~\eqref{enu:case-1}.
  Note that we abbreviated $\scr B_i^{(1,0)}$ by $\scr B$, so
  consider the definition of $\scr B_i^{(1)}$ to see that
  $J \notin \scr B_i^{(1,0)}$, and consider~\eqref{eqn:split-0} to
  determine that also $J \notin \tau_m(\scr B_i^{(1,0)})$.

  Let us now assume case~\eqref{enu:case-3} is true.
  This means that either
  $\inf I + \frac{1}{3} |I| \in \gamma(J)$ or
  $\inf I + \frac{2}{3} |I| \in \gamma(J)$, depending on the sign of
  the one--third--shift for $I$.
  We fix $z \in \{1,2\}$ and assume that
  \begin{equation}\label{proof:case-3-assm-1}
    \inf I + \frac{z}{3} |I| \in \gamma(J).
  \end{equation}
  Due to~\eqref{eqn:split-0} we see that
  $\pred^\lambda(\gamma_0(J)) = \pred^\lambda(J)$, so if
  we set $K = \pred^\lambda(J)$, then
  \begin{equation*}
    \inf I + \frac{z}{3} |I| \in K.
  \end{equation*}
  This corresponds to either one of the following being true
  \begin{equation}\label{proof:case-3-assm-2}
    \inf I + \frac{z}{3} |I| = \inf K + \frac{1}{3} |K|
    \qquad \text{or}\qquad
    \inf I + \frac{z}{3} |I| = \inf K + \frac{2}{3} |K|.
  \end{equation}

  \noindent
  If $J \in \scr B$ we know $J \subset K_{11}$, thus
  \begin{equation}\label{proof:case-3-assm-3}
    \begin{aligned}
      \inf \gamma(J)
      & \geq \inf K + \frac{3}{4}|K| - 2^{-\lambda} |K|\\
      & > \inf K + \frac{2}{3}|K|.
    \end{aligned}
  \end{equation}
  Recall that $K_{11}$ denotes the unique
  $M \subset K$, $M \in \dint$, $|M| = |K|/4$ such that
  $\sup M = \sup K$.
  The last strict inequality holds true since $\lambda \geq 4$ per
  construction of $\scr B$, see~\eqref{eqn:dyadic_shift_width} if
  $|m| \geq 2$ and note the exception for $|m|=1$ beneath.
  Combining~\eqref{proof:case-3-assm-1}
  and~\eqref{proof:case-3-assm-3} yields
  \begin{equation*}
    \inf I + \frac{z}{3} |I|
    > \inf K + \frac{2}{3}|K|,
  \end{equation*}
  which contradicts~\eqref{proof:case-3-assm-2} in both cases.

  \noindent
  If $J \in \scr \tau_m(\scr B)$ we know $J \subset K_{00}$, where
  $K_{00}$ denotes the unique $M \subset K$, $M \in \dint$,
  $|M| = |K|/4$ such that $\inf M = \inf K$.
  So we note
  \begin{equation}\label{proof:case-3-assm-4}
    \begin{aligned}
      \sup \gamma(J)
      & \leq \inf K + \frac{1}{4}|K| + 2^{-\lambda} |K|\\
      & < \inf K + \frac{1}{3}|K|.
    \end{aligned}
  \end{equation}
  The last strict inequality holds true since $\lambda \geq 4$ per
  construction of $\scr B$, see~\eqref{eqn:dyadic_shift_width} if
  $|m| \geq 2$ and note the exception for $|m|=1$ beneath.
  Combining~\eqref{proof:case-3-assm-1}
  and~\eqref{proof:case-3-assm-4} yields
  \begin{equation*}
    \inf I + \frac{z}{3} |I|
    < \inf K + \frac{1}{3}|K|,
  \end{equation*}
  which contradicts~\eqref{proof:case-3-assm-2} in both cases.

  Case~\eqref{enu:case-4} is analogous to case~\eqref{enu:case-3}.

  Altogether we proved that our assumption~\eqref{proof:assumption}
  was false, therefore
  \begin{equation*}
    \beta(J) \subset \beta_0(I)
    \qquad \text{or}\qquad
    \beta(J) \subset \beta_1(I)
  \end{equation*}
  for all $I, J \in \scr B$, $|J| < |I|$
  such that $\beta(J) \isect \beta(I) \neq \emptyset$.
  In other words, the support of $b_J$ is contained in a set where
  $b_I^{(0)}$ is constant, hence
  \begin{equation*}
    \big\{ b_I^{(0)}, b_{\tau_m(I)}^{(0)}\, :\, I \in \scr B_i^{(1,0)} \big\}
  \end{equation*}
  constitutes a martingale difference sequence.

  The proof that
  $\big\{ b_I^{(1)}, b_{\tau_m(I)}^{(1)}\, :\, I \in \scr B^{(1,1)} \big\}$
  constitutes a martingale difference sequence is essentially the same
  argument, so we omit it.
\end{proof}

\begin{rem}\label{rem:mds_decomposition}
  Note in Theorem~\ref{thm:mds_decomposition} we actually proved
  the following stronger result.
  For every $0 \leq i \leq K(m)$ and $\varepsilon \in \{0,1\}$, the
  collection
  \begin{equation*}
    \big\{ b_I^{(\varepsilon)}, b_{\tau_m(I)}^{(\varepsilon)}\, :\,
    I \in \scr B_i^{(1,\varepsilon)} \big\}
  \end{equation*}
  is a martingale difference sequence, which certainly
  implies~\eqref{eqn:mds-b}.
\end{rem}

Consider the splitting of $\dint$ into the sets
$\scr B_i^{(\delta)}$, $0 \leq i \leq K(m)$, $\delta \in \{0,1\}$,
see Lemma~\vref{lem:shift-lemma} for details, which we used in
Theorem~\vref{thm:shift-restricted-tm} to treat the shift operator
$T_m$. Retracing our steps in the proof of
Theorem~\ref{thm:shift-restricted-tm} we find that we could actually
repeat this proof with the operator $T_m$ replaced by any of the
operators
$U_m \circ P^{(0)}$, $A_m^{(\varepsilon)}\circ P^{(1,\varepsilon)}$,
$B_m^{(\varepsilon)}\circ P^{(1,\varepsilon)}$,
$\varepsilon \in \{0,1\}$.
The details are elaborated in
Theorem~\ref{thm:shift-restricted-um} below.
\begin{thm}\label{thm:shift-restricted-um}
  Let $m \in \bb Z$ and $m \geq 1$.
  Then for all $0 \leq i \leq K(m)$ and $\varepsilon \in \{0,1\}$, we
  have the estimates
  \begin{equation}\label{eqn:shift_estimate-restricted-um-1}
    \begin{aligned}
    \big\| U_m \circ P_i^{(0)}\, u \big\|_{L_X^p}
    & \leq C \cdot \big\| P_i^{(0)}\, u \big\|_{L_X^p},\\
    \big\| U_m \circ P_i^{(1,\varepsilon)}\, u \big\|_{L_X^p}
    & \leq C \cdot \big\| P_i^{(1,\varepsilon)}\, u \big\|_{L_X^p},
    \end{aligned}
  \end{equation}
  for all $u \in L_X^p$, where the constant $C$ depends only on
  $\umdconst_p(X)$.
  Furthermore, we have the bound $K(m) \leq 7 + 2\cdot \log_2(m)$.
\end{thm}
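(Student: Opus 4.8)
The plan is to imitate the proof of Theorem~\ref{thm:shift-restricted-tm} once for each of the five operators occurring in the decomposition~\eqref{eqn:mds_decomposition-um-3}. First I would reduce the second line of~\eqref{eqn:shift_estimate-restricted-um-1} to separate bounds for $A_m^{(\varepsilon)}\circ P_i^{(1,\varepsilon)}$ and $B_m^{(\varepsilon)}\circ P_i^{(1,\varepsilon)}$, using the identity $U_m=A_m^{(\varepsilon)}+B_m^{(\varepsilon)}$ on the range of $P^{(1,\varepsilon)}$, see~\eqref{eqn:mds_decomposition-1}. By symmetry it suffices to treat $m\geq1$, and $K(m)\leq 7+2\log_2(m)$ is exactly the bound from Lemma~\ref{lem:shift-lemma}. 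Thus I am left with three estimates: one for $V=U_m\circ P_i^{(0)}$ on the range of $P_i^{(0)}$, and one each for $V=A_m^{(\varepsilon)}\circ P_i^{(1,\varepsilon)}$ and $V=B_m^{(\varepsilon)}\circ P_i^{(1,\varepsilon)}$ on the range of $P_i^{(1,\varepsilon)}$.

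For a fixed such $V$, with associated collection $\scr B$, the argument runs parallel to Theorem~\ref{thm:shift-restricted-tm}. The role played there by the trivial fact that $\{T_m h_I\}_{I\in\scr A}$ is always a martingale difference sequence is now taken over by Theorem~\ref{thm:mds_decomposition} and Remark~\ref{rem:mds_decomposition}: the images $\{Vh_I:I\in\scr B\}$ — for $B_m^{(\varepsilon)}$ even the larger collection $\{b_I^{(\varepsilon)},b_{\tau_m(I)}^{(\varepsilon)}:I\in\scr B\}$ — form a martingale difference sequence with respect to an explicit filtration $\{\scr F_j\}_j$, generated by the atoms $I\union\tau_m(I)$ for $U_m\circ P_i^{(0)}$, by $\alpha_\varepsilon(I)\union\alpha_\varepsilon(\tau_m(I))$ for $A_m^{(\varepsilon)}$, and by the $\beta$--sets~\eqref{eqn:beta-left}--\eqref{eqn:beta-left+right} for $B_m^{(\varepsilon)}$, read off the grid $\dint$ or $\sigma(\dint)$ as appropriate. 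The $\umd$--property and Kahane's contraction principle~\eqref{eqn:kahanes_contraction_principle} then yield $\|Vu\|_{L_X^p}\approx\int_0^1\|\sum_j r_j(t)\,\indop(Vu)_j\|_{L_X^p}\,\mathrm dt$, as in~\eqref{eqn:indop-isometry}: at each point every level function equals a single coefficient $u_I|I|^{-1}$ times a sign, so replacing the $\pm1$--valued building blocks by characteristic functions costs nothing.

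The key step is the pointwise identity $\indop(Vu)_j=2\cdot\cond\big(\indop(w)_j\mid\scr F_j\big)$, valid off the set where the left-hand side vanishes, where $w$ is the source after the relevant one--third--shift: $w=P_i^{(0)}u$ for $U_m\circ P_i^{(0)}$ (no shift is needed, since $\delta=0$), and $w=S_\varepsilon\big(P_i^{(1,\varepsilon)}u\big)$ for $A_m^{(\varepsilon)}$ (and a related function for $B_m^{(\varepsilon)}$, see below). For $A_m^{(\varepsilon)}$ this rests on the commutation $\sigma_\varepsilon(\tau_m(I))=\tau_m(\sigma_\varepsilon(I))$, which gives $a_I^{(\varepsilon)}=\charfun_{\tau_m(\sigma_\varepsilon(I))}-\charfun_{\sigma_\varepsilon(I)}$ and hence $A_m^{(\varepsilon)}\circ P_i^{(1,\varepsilon)}=U_m\circ S_\varepsilon\circ P_i^{(1,\varepsilon)}$, so the atom $\sigma_\varepsilon(I)\union\tau_m(\sigma_\varepsilon(I))$ has twice the measure of $\sigma_\varepsilon(I)$ and $\cond(\charfun_{\sigma_\varepsilon(I)}\mid\scr F_j)=\tfrac12\charfun_{\sigma_\varepsilon(I)\union\tau_m(\sigma_\varepsilon(I))}$. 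Granting the identity, I apply Kahane's contraction principle pointwise in the space variable (with coefficients in $\{0,\pm1\}$) to pass to $\cond(\indop(w)_j\mid\scr F_j)$, then Bourgain's version of Stein's inequality~\eqref{eqn:steins_martingale_inequality} to remove the conditional expectations, then~\eqref{eqn:indop-isometry} and the $\umd$--property to recognise $\int_0^1\|\sum_j r_j(t)\,\indop(w)_j\|_{L_X^p}\,\mathrm dt\approx\|w\|_{L_X^p}$, and finally Theorem~\ref{thm:one-third-trick-isomorphism} (respectively Theorem~\ref{thm:modified-one-third-trick-isomorphism}) to pass from $\|w\|_{L_X^p}$ to $\|P_i^{(0)}u\|_{L_X^p}$ (respectively $\|P_i^{(1,\varepsilon)}u\|_{L_X^p}$). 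For $U_m\circ P_i^{(0)}$ no isomorphism step is needed; for the other two the one--third--shift isomorphism plays the role that the commutation $T_m=S^{-\delta}T_mS^\delta$ played at the end of the proof of Theorem~\ref{thm:shift-restricted-tm}.

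The main obstacle is the $B_m^{(\varepsilon)}$ piece: the building blocks $b_I^{(\varepsilon)}-b_{\tau_m(I)}^{(\varepsilon)}$ are widespread — four characteristic functions of the sets $\beta_0,\beta_1$ attached to $I$ and to $\tau_m(I)$, see Figures~\ref{fig:mds_decomposition0} and~\ref{fig:mds_decomposition1} — so the atom carrying the scale-$2^{-j}$ part of $Vu$ is no longer a single set $J\union\tau_m(J)$ but a union of such, and the pointwise domination requires the finer filtration that separates $\beta_0$ from $\beta_1$ inside an atom, i.e.\ precisely the $\pred^\lambda$--relations and the four-case analysis~\eqref{enu:case-1}--\eqref{enu:case-4} from the proof of Theorem~\ref{thm:mds_decomposition}; the corresponding $w$ is then built from the characteristic functions $\charfun_{\beta_\varepsilon(I)}$, and $\|w\|_{L_X^p}\lesssim\|P_i^{(1,\varepsilon)}u\|_{L_X^p}$ follows since $\beta_\varepsilon(I)\subset\sigma_\varepsilon(I)$, again through Kahane's contraction principle and the one--third--shift isomorphisms. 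Everything else is bookkeeping; the bound $K(m)\leq 7+2\log_2(m)$ and the case $m\leq-1$ are inherited from Lemma~\ref{lem:shift-lemma} and the symmetry already invoked.
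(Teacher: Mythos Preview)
Your proposal is correct and follows essentially the same route as the paper's own proof: split $U_m\circ P_i^{(1,\varepsilon)}$ into $A_m^{(\varepsilon)}$ and $B_m^{(\varepsilon)}$ via~\eqref{eqn:mds_decomposition-1}, invoke Theorem~\ref{thm:mds_decomposition} (and Remark~\ref{rem:mds_decomposition}) to introduce Rademacher means through the $\umd$--property, and then rerun the Stein--Bourgain argument of Theorem~\ref{thm:shift-restricted-tm}, finishing with the unilateral one--third--shift isomorphisms of Theorem~\ref{thm:modified-one-third-trick-isomorphism}. Your identification $A_m^{(\varepsilon)}\circ P_i^{(1,\varepsilon)}=U_m\circ S_\varepsilon\circ P_i^{(1,\varepsilon)}$ is exactly what the paper encodes as $h_{\alpha_\varepsilon(I)}=S_\varepsilon h_I$, and for $B_m^{(\varepsilon)}$ the paper, rather than building your finer $\beta$--filtration, simply reduces to $\sum r_I u_I b_I^{(\varepsilon)}$ and then uses the pointwise bound $|b_I^{(\varepsilon)}|\leq |S_0 h_I|+|S_1 h_I|$ together with Kahane's contraction principle---a minor packaging difference, not a different idea.
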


\begin{rem}
  For reasons of symmetry, the same result holds true for
  $m \leq -1$, that is besides the appropriate modifications for
  $P_i^{(0)}$ and $P_i^{(1,\varepsilon)}$.
\end{rem}

\begin{proof}
  Let $m \in \bb Z$, $m \geq 1$ and $0 \leq i \leq K(m)$ be fixed
  throughout the rest of the proof.
  
  First, we will estimate $U_m \circ P_i^{(0)}$.
  Due to Theorem~\ref{thm:mds_decomposition} respectively
  Remark~\ref{rem:mds_decomposition} we know that
  $\big\{ U_m\circ P_i^{(0)} h_I\, :\, I \in \dint \big\}$
  forms a martingale difference sequence, which enables us to
  introduce Rademacher functions via the $\umd$--property.
  Hence
  \begin{align*}
    \big\| U_m \circ P_i^{(0)}\, u \big\|_{L_X^p}
    & \approx \int_0^1 \Big\| \sum_{I \in \scr B_i^{(0)}}
      r_I(t)\, \langle u, h_I \rangle\, U_m\, h_I\, |I|^{-1}
    \Big\|_{L_X^p}\, \mathrm dt\\
    & = \int_0^1 \Big\| \sum_{I \in \scr B_i^{(0)}}
      r_I(t)\, \langle u, h_I \rangle\, (\Id + T_m)\, h_I\, |I|^{-1}
    \Big\|_{L_X^p}\, \mathrm dt
  \end{align*}
  for all $u \in L_X^p$.
  This is all we need to repeat the proof of
  Theorem~\ref{thm:shift-restricted-tm} in Section~\ref{s:shift-tm}
  with $T_m$ replaced by $\Id + T_m$.

  Now we turn to the estimate for $U_m \circ P_i^{(1,\varepsilon)}$,
  with $\varepsilon \in \{0,1\}$ fixed throughout the rest of the
  proof.
  Observe that
  \begin{equation*}
    U_m \circ P_i^{(1,\varepsilon)}\, u
    = A_m^{(\varepsilon)} \circ P_i^{(1,\varepsilon)}\, u
    + B_m^{(\varepsilon)} \circ P_i^{(1,\varepsilon)}\, u,
  \end{equation*}
  for all $u \in L_X^p$, see~\eqref{eqn:mds_decomposition-1}.
  Theorem~\vref{thm:mds_decomposition} ensures that both
  \begin{equation*}
    \big\{ A_m^{(\varepsilon)}\circ P_i^{(1,\varepsilon)}\, h_I\, :\,
    I \in \dint \big\}
    \quad \text{and}\quad
    \big\{ B_m^{(\varepsilon)}\circ P_i^{(1,\varepsilon)}\, h_I\, :\,
    I \in \dint \big\}
  \end{equation*}
  form martingale difference sequences, which allows us to introduce
  Rademacher means via the $\umd$--property, hence
  \begin{align*}
    \big\|
      A_m^{(\varepsilon)} \circ P_i^{(1,\varepsilon)}\, u
    \big\|_{L_X^p}
    &\lesssim \int_0^1 \Big\| \sum_{I \in \scr B_i^{(1,\varepsilon)}}
      r_I(t)\, \langle u, h_I \rangle\, a_I^{(\varepsilon)}\, |I|^{-1}
    \Big\|_{L_X^p} \mathrm dt
    \intertext{and}
    \big\|
      B_m^{(\varepsilon)} \circ P_i^{(1,\varepsilon)}\, u
    \big\|_{L_X^p}
    & \lesssim \int_0^1 \Big\| \sum_{I \in \scr B_i^{(1,\varepsilon)}}
      r_I(t)\, \langle u, h_I \rangle\,
      \big(
        b_I^{(\varepsilon)} - b_{\tau_m(I)}^{(\varepsilon)}
      \big)\,
      h_I\, |I|^{-1}
    \Big\|_{L_X^p} \mathrm dt,
  \end{align*}
  for all $u \in L_X^p$.
  Now we can essentially repeat the proof of
  Theorem~\ref{thm:shift-restricted-tm} in Section~\ref{s:shift-tm},
  for $\delta=1$ and with $T_m$ replaced by $A_m^{(\varepsilon)}$ and
  $B_m^{(\varepsilon)}$, respectively. We have to utilize the
  unilateral operators $S_0$ and $S_1$ instead of $S$ as well, see
  Subsection~\vref{ss:unilateral}.
  If we do so, we end up with the estimates
  \begin{align*}
    \big\|
      A_m^{(\varepsilon)} \circ P_i^{(1,\varepsilon)}\, u
    \big\|_{L_X^p}
    &\lesssim \int_0^1 \Big\| \sum_{I \in \scr B_i^{(1,\varepsilon)}}
      r_I(t)\, \langle u, h_I \rangle\, h_{\alpha_\varepsilon(I)}\, |I|^{-1}
    \Big\|_{L_X^p} \mathrm dt
    \intertext{and}
    \big\|
      B_m^{(\varepsilon)} \circ P_i^{(1,\varepsilon)}\, u
    \big\|_{L_X^p}
    & \lesssim \int_0^1 \Big\| \sum_{I \in \scr B_i^{(1,\varepsilon)}}
      r_I(t)\, \langle u, h_I \rangle\, b_I^{(\varepsilon)}\, |I|^{-1}
    \Big\|_{L_X^p} \mathrm dt,
  \end{align*}
  for all $u \in L_X^p$.
  Thus, considering $h_{\alpha_\varepsilon(I)} = S_\varepsilon h_I$ and
  $|b_I^{(\varepsilon)}| \leq |S_0 h_I| + |S_1 h_I|$,
  see~\eqref{eqn:unilateral-map-0}, \eqref{eqn:unilateral-map-1},
  \eqref{eqn:unilateral-operator-0}, \eqref{eqn:unilateral-operator-1}
  and combining our estimates for $A_m^{(\varepsilon)}$ and
  $B_m^{(\varepsilon)}$ with the inequalities for the unilateral
  one-third-shift operators $S_0$ and $S_1$ in
  Theorem~\vref{thm:modified-one-third-trick-isomorphism}
  yields
  \begin{align*}
    \big\| U_m\circ P_i^{(1,\varepsilon)}\, u \big\|_{L_X^p}
    & \lesssim \big\| S_0\circ P_i^{(1,\varepsilon)}\, u \big\|_{L_X^p}
      + \big\| S_1\circ P_i^{(1,\varepsilon)}\, u \big\|_{L_X^p}\\
    & \lesssim \big\| P_i^{(1,\varepsilon)}\, u \big\|_{L_X^p},
  \end{align*}
  for all $u \in L_X^p$, concluding the proof.
\end{proof}

From the results established in Theorem~\vref{thm:shift-restricted-tm} and
Theorem~\vref{thm:shift-restricted-um} one can obtain the estimates stated in
Theorem~\ref{thm:figiel} below, by exploiting the type and cotype inequalities
for $T_m$, and only the cotype inequality for $U_m$.
Inserting Theorem~\vref{thm:shift-restricted-tm} and
Theorem~\vref{thm:shift-restricted-um}
into~\cite[Lemma 1]{figiel:1988} one can
obtain~\cite[Theorem 1]{figiel:1988} stated below for sake of
completeness.
\begin{thm}[\cite{figiel:1988}]\label{thm:figiel}
  Let $1 < p < \infty$, and $X$ be a Banach space with the
  $\umd$--property. For $m \in \bb Z$ let the map
  $\tau_m$ denote the shift map defined by
  \begin{equation*}
    I \mapsto I + m\, |I|.
  \end{equation*}
  Let $T_m$, $U_m$ denote the linear extensions of the maps
  \begin{align*}
    T_m h_I &= h_{\tau_m(I)},
    \intertext{and}
    U_m h_I &= \charfun_{\tau_m(I)} - \charfun_I,
  \end{align*}
  respectively, then
  \begin{align*}
    \|T_m\, :\, L_X^p \rightarrow L_X^p\|
    & \leq C\, \big( \log_2 (2 + |m|) \big)^\alpha,\\
    \|U_m\, :\, L_X^p \rightarrow L_X^p\|
    & \leq C\, \big( \log_2 (2 + |m|) \big)^\beta,
  \end{align*}
  where the constant $C > 0$ depends only $\umdconst_p(X)$ and
  $0 < \alpha, \beta < 1$.
  Moreover, if $L_X^p$ has type $\cal T$ and cotype $\cal C$,
  then one can take
  $\alpha = \frac{1}{\cal T} - \frac{1}{\cal C}$ and
  $\beta = 1 - \frac{1}{\cal C}$.
\end{thm}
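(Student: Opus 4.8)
The plan is to derive Theorem~\ref{thm:figiel} from the localized estimates already in hand — Theorem~\ref{thm:shift-restricted-tm} for $T_m$ and Theorem~\ref{thm:shift-restricted-um} for $U_m$ — by inserting them into the type/cotype averaging argument of \cite[Lemma~1]{figiel:1988}. By reflecting the real line we may assume $m \geq 1$ (for $m = 0$ one has $T_0 = \Id$ and $U_0 = 0$). Throughout we use that, $X$ being $\umd$, every Haar multiplier with symbol in $\{-1,1\}$ is bounded on $L_X^p$ by a constant depending only on $\umdconst_p(X)$; see the discussion following~\eqref{eqn:projection-1}.

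First I would treat $T_m$. Fix $u \in L_X^p$. By Lemma~\ref{lem:shift-lemma} the $N := 2(K(m)+1) \leq 16 + 4\log_2(1+|m|)$ collections $\scr B_i^{(\delta)}$ partition $\dint$; let $P_1,\dots,P_N$ be the associated Haar block projections, so $u = \sum_{\nu=1}^N P_\nu u$. Since each $\sum_\nu r_\nu(t)\, P_\nu$ is a $\{-1,1\}$--Haar multiplier, the $\umd$--property yields
\begin{equation*}
  \int_0^1 \big\| \sum_{\nu=1}^N r_\nu(t)\, P_\nu u \big\|_{L_X^p}\, \mathrm dt
  \approx \|u\|_{L_X^p}.
\end{equation*}
Moreover $\tau_m$ permutes $\dint$, so the functions $T_m P_\nu u$ have Haar coefficients supported on the disjoint collections $\tau_m(\scr B_i^{(\delta)})$, which again partition $\dint$; hence $\sum_\nu r_\nu(t)\, T_m P_\nu u$ is a $\{-1,1\}$--Haar multiplier applied to $T_m u$, so likewise
\begin{equation*}
  \int_0^1 \big\| \sum_{\nu=1}^N r_\nu(t)\, T_m P_\nu u \big\|_{L_X^p}\, \mathrm dt
  \approx \|T_m u\|_{L_X^p}.
\end{equation*}
Assume $L_X^p$ has type $\cal T$ and cotype $\cal C$. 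Chaining the second equivalence, the type--$\cal T$ inequality, the bound $\|T_m P_\nu u\| \leq C\|P_\nu u\|$ of Theorem~\ref{thm:shift-restricted-tm}, the elementary inequality $(\sum_{\nu=1}^N a_\nu^{\cal T})^{1/\cal T} \leq N^{1/\cal T-1/\cal C}(\sum_{\nu=1}^N a_\nu^{\cal C})^{1/\cal C}$ (Hölder, since $\cal T \leq \cal C$), the cotype--$\cal C$ inequality, and finally the first equivalence, one obtains $\|T_m u\|_{L_X^p} \lesssim N^{1/\cal T - 1/\cal C}\|u\|_{L_X^p}$, which is the claimed estimate with $\alpha = \frac{1}{\cal T} - \frac{1}{\cal C}$ because $N \lesssim \log_2(2+|m|)$.

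For $U_m$ the scheme is the same, but the range--side averaging drops out. Write $u = \sum_{i} P_i^{(0)} u + \sum_{\varepsilon \in \{0,1\}}\sum_i P_i^{(1,\varepsilon)} u$, a sum of $N \leq 3(K(m)+1) \leq 24 + 6\log_2(1+|m|)$ pieces whose index collections again partition $\dint$, so $U_m u = \sum_i U_m P_i^{(0)} u + \sum_{\varepsilon,i} U_m P_i^{(1,\varepsilon)} u$. Here the $U_m h_I$ are differences of indicator functions rather than Haar functions, so the ranges of the pieces need not reassemble into a Haar decomposition of $U_m u$; hence one only uses the triangle inequality, $\|U_m u\|_{L_X^p} \leq \sum_\nu \|U_m P_\nu u\|_{L_X^p}$. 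Combining this with the estimates of Theorem~\ref{thm:shift-restricted-um}, the inequality $\sum_{\nu=1}^N a_\nu \leq N^{1-1/\cal C}(\sum_{\nu=1}^N a_\nu^{\cal C})^{1/\cal C}$, the cotype--$\cal C$ inequality, and the domain equivalence above, gives $\|U_m u\|_{L_X^p} \lesssim N^{1-1/\cal C}\|u\|_{L_X^p}$, i.e. the bound with $\beta = 1 - \frac{1}{\cal C}$; this is exactly where one uses \emph{only} the cotype of $L_X^p$ together with the trivial type--$1$ estimate. The unconditional existence of exponents $0 < \alpha,\beta < 1$, with constant depending only on $\umdconst_p(X)$, then follows because $X$ and $X^*$ are $\umd$, hence $K$--convex, so $L_X^p$ has some nontrivial type $\cal T > 1$ and some finite cotype $\cal C$; inserting these values above finishes the proof.

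Most of the work of Theorem~\ref{thm:figiel} has already been absorbed into Theorem~\ref{thm:shift-restricted-tm} and Theorem~\ref{thm:shift-restricted-um}, and ultimately into the combinatorial Lemma~\ref{lem:shift-lemma} and the one--third--trick, which turn $T_m$ and $U_m$ into sums of $\lesssim \log_2(2+|m|)$ martingale transform operators. What is left is essentially bookkeeping; the only genuinely delicate point is the observation that for $T_m$ the ranges of the pieces $T_m P_\nu$ still constitute a Haar block decomposition of $T_m u$ (because $\tau_m$ merely permutes the dyadic grid), which permits invoking the type inequality on the range and thereby yields the sharper exponent $\alpha = \frac{1}{\cal T}-\frac{1}{\cal C}$ rather than only $1 - \frac{1}{\cal C}$.
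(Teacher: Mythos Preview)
Your proposal is correct and follows precisely the route the paper indicates: the paper does not spell out a proof of Theorem~\ref{thm:figiel} but merely says to insert Theorems~\ref{thm:shift-restricted-tm} and~\ref{thm:shift-restricted-um} into \cite[Lemma~1]{figiel:1988}, exploiting type and cotype for $T_m$ and only cotype for $U_m$, which is exactly the averaging argument you have written out. Your observation that the range side of $T_m$ still admits a Haar block decomposition (because $\tau_m$ permutes $\dint$) is the key point behind the sharper exponent $\alpha = \frac{1}{\cal T}-\frac{1}{\cal C}$, and matches the paper's remark verbatim.
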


\bibliographystyle{alpha}
\bibliography{bibliography}

\begin{thebibliography}{CWW85}

\bibitem[Bou86]{bourgain:1986}
J.~Bourgain.
\newblock {Vector-valued singular integrals and the $H\sp 1$-BMO duality.}
\newblock {Probability theory and harmonic analysis, Pap. Mini-Conf.,
  Cleveland/Ohio 1983, Pure Appl. Math., Marcel Dekker 98, 1-19 (1986).}, 1986.

\bibitem[Bur81]{burkholder:1981}
D.~L. Burkholder.
\newblock A {G}eometrical {C}haracterization of {B}anach {S}paces in which
  {M}artingale {D}ifference {S}equences are {U}nconditional.
\newblock {\em Annals of Probability}, 9(6):997--1011, 1981.

\bibitem[CWW85]{chang_wilson_wolff:1985}
S.Y.A. Chang, J.M. Wilson, and T.H. Wolff.
\newblock {Some weighted norm inequalities concerning the Schr\"odinger
  operators.}
\newblock {\em Comment. Math. Helv.}, 60:217--246, 1985.

\bibitem[Dav80]{davis:1980}
Burgess Davis.
\newblock Hardy spaces and rearrangements.
\newblock {\em Trans. Amer. Math. Soc.}, 261(1):211--233, 1980.

\bibitem[Fig88]{figiel:1988}
T.~Figiel.
\newblock {O}n {E}quivalence of {S}ome {B}ases to the {H}aar {S}ystem in
  {S}paces of {V}ector-valued {F}unctions.
\newblock {\em Bulletin of the Polish Academy of Sciences}, 36(3--4):119--131,
  1988.

\bibitem[Fig90]{figiel:1990}
T.~Figiel.
\newblock Singular {I}ntegral {O}perators: {A} {M}artingale {A}pproach.
\newblock In {\em Geometry of Banach Spaces}, number 158 in London Mathematical
  Society Lecture Note Series, pages 95--110, 1990.

\bibitem[FW01]{figiel_wojtaszczyk:2001}
T.~Figiel and P.~Wojtaszczyk.
\newblock Special bases in function spaces.
\newblock In {\em Handbook of the geometry of {B}anach spaces, {V}ol. {I}},
  pages 561--597. North-Holland, Amsterdam, 2001.

\bibitem[GJ82]{garnett_jones:1982}
John~B. Garnett and Peter~W. Jones.
\newblock {BMO from dyadic BMO.}
\newblock {\em Pac. J. Math.}, 99:351--371, 1982.

\bibitem[Hyt11]{hytonen:2011}
T.~P. Hytonen.
\newblock Foundations of vector-valued singular integrals revisited---with
  random dyadic cubes.
\newblock {\em http://arxiv.org/abs/1110.5826, [v1] Wed, 26 Oct 2011}, 2011.

\bibitem[Kah85]{kahane:1985}
J.-P. Kahane.
\newblock {\em Some Random Series of Functions, Second Edition}.
\newblock Cambridge University Press, 1985.

\bibitem[Lec11]{lechner:2011}
R.~Lechner.
\newblock An {I}nterpolatory {E}stimate and {S}hift {O}perators.
\newblock {\em Ph.D. thesis, July 2011,
  http://shrimp.bayou.uni-linz.ac.at/Papers/dvi/phd\_thesis\_Richard\_Lechner.pdf},
  2011.

\bibitem[MP11]{mueller_passenbrunner:2011}
Paul F.~X. M{\"u}ller and Markus Passenbrunner.
\newblock A decomposition theorem for singular integral operators on spaces of
  homogeneous type.
\newblock {\em To appear in the Journal of Functional Analysis}, 2011.

\bibitem[M{\"u}l05]{mueller_p.f.x.:2005}
Paul F.~X. M{\"u}ller.
\newblock {\em {Isomorphisms between $H^1$ spaces.}}
\newblock {Monografie Matematyczne. Instytut Matematyczny PAN (New Series) 66.
  Basel: Birkh\"auser.}, 2005.

\bibitem[NS97]{novikov_semenov:1997}
I.~Novikov and E.~Semenov.
\newblock {\em {Haar series and linear operators.}}
\newblock {Kluwer Academic Publishers}, 1997.

\bibitem[NTV97]{nazarov_treil_volberg:1997}
F.~Nazarov, S.~Treil, and A.~Volberg.
\newblock {Cauchy integral and Calderón-Zygmund operators on nonhomogeneous
  spaces}.
\newblock {\em Internat. Math. Res. Notices}, 15:703--726, 1997.

\bibitem[NTV03]{nazarov_treil_volberg:2003}
F.~Nazarov, S.~Treil, and A.~Volberg.
\newblock {The $Tb$-theorem on non-homogeneous spaces.}
\newblock {\em Acta Math.}, 190(2):151--239, 2003.

\bibitem[Ste70]{stein:1970-lp}
E.~M. Stein.
\newblock {\em Topics in harmonic analysis related to the {L}ittlewood-{P}aley
  theory.}
\newblock Annals of Mathematics Studies, No. 63. Princeton University Press,
  Princeton, N.J., 1970.

\bibitem[Wol82]{wolff:1982}
Thomas~H. Wolff.
\newblock {Two algebras of bounded functions.}
\newblock {\em Duke Math. J.}, 49:321--328, 1982.

\end{thebibliography}
\end{document}